\newtheorem{Le}{Lemma}[section]
\newtheorem{Def}[Le]{Definition}
\newtheorem{St}[Le]{Proposition}
\newtheorem{Th}{Theorem}[section]
\newtheorem{Cor}[Le]{Corollary}
\newtheorem{Rem}[Le]{Remark}
\numberwithin{equation}{section}
\newcommand{\R}{\mathbb{R}}
\newcommand{\N}{\mathbb{N}}
\newcommand{\Z}{\mathbb{Z}}
\DeclareMathOperator{\supp}{supp}
\newcommand{\eps}{\varepsilon}
\newcommand{\eq}[1]{\begin{equation}{#1}\end{equation}}
\newcommand{\mlt}[1]{\begin{multline}{#1}\end{multline}}
\newcommand{\alg}[1]{\begin{align}{#1}\end{align}}
\newcommand{\set}[2]{\{{#1}\mid{#2}\}}
\newcommand{\Set}[2]{\Big\{{#1}\,\Big|\;{#2}\Big\}}
\newcommand{\scalprod}[2]{\langle{#1},{#2}\rangle}
\newcommand{\Lseqref}[1]{\stackrel{\scriptscriptstyle{\eqref{#1}}}{\lesssim}}
\newcommand{\LseqrefTwo}[2]{\stackrel{\scriptscriptstyle{\eqref{#1},\eqref{#2}}}{\lesssim}}
\DeclareMathOperator{\I}{I}
\newcommand{\E}{\mathrm{E}}
\DeclareMathOperator{\diam}{diam}
\newcommand{\D}{\mathbb{D}}
\DeclareMathOperator{\cl}{cl}
\newcommand{\norm}{\vec{\mathrm{n}}}
\newcommand{\B}{\mathfrak{B}}
\newcommand{\Me}{\mathcal{M}}
\newcommand{\MM}{\mathbb{M}}
\DeclareMathOperator{\M}{M}
\newcommand{\Eb}{\mathrm{E}^{\boldsymbol{b}}}
\DeclareMathOperator{\DN}{DN}
\newcommand{\DNo}{\DN_{\mathrm{out}}}
\newcommand{\DNi}{\DN_{\mathrm{in}}}
\title{Maz'ya's~$\Phi$-inequalities on domains}
\author{Dmitriy Stolyarov\thanks{Supported by Russian Science Foundation grant N 19-71-10023}}
\begin{document}
\maketitle
\begin{abstract}
We find necessary and sufficient conditions on the function~$\Phi$ for the inequality
$$\Big|\int_\Omega \Phi(K*f)\Big|\lesssim \|f\|_{L_1(\R^d)}^p$$
to be true. Here~$K$ is a positively homogeneous of order~$\alpha - d$, possibly vector valued, kernel,~$\Phi$ is a~$p$-homogeneous function, and~$p=d/(d-\alpha)$. The domain~$\Omega\subset \R^d$ is either bounded with~$C^{1,\beta}$ smooth boundary for some~$\beta > 0$ or a halfspace in~$\R^d$. As a corollary, we describe the positively homogeneous of order~$d/(d-1)$ functions~$\Phi\colon \R^d \to \R$ that are suitable for the bound
$$\Big|\int_\Omega \Phi(\nabla u)\Big|\lesssim \int_\Omega |\Delta u|.$$
\end{abstract}

\section{Introduction}
This paper contains development of the theory of Maz'ya's~$\Phi$-inequalities given in~\cite{Stolyarov2021bis} and~\cite{Stolyarov2021}. These inequalities might be thought of as corrections of the invalid endpoint Hardy--Littlewood--Sobolev inequality
\eq{\label{HLSp=1}
\|\I_\alpha [f]\|_{L_{p}(\R^d)} \lesssim \|f\|_{L_1(\R^d)},\qquad p = \frac{d}{d-\alpha}.
}
The symbol~$\I_\alpha$ denotes the Riesz potential of order~$\alpha$:
\eq{
\I_\alpha [f] = c_{d,\alpha}\int\limits_{\R^d}\frac{f(y)}{|x-y|^{d-\alpha}}\,dy,\qquad f\in C_0^\infty(\R^d).
}
For the classical Hardy--Littlewood--Sobolev inequality and its applications, see the original paper~\cite{Sobolev1938} or modern presentation in Subsection~$\mathrm{VIII}.4.2$ of~\cite{Stein1993}. The counterexample to~\eqref{HLSp=1} is provided by a sequence of functions~$f$ approximating a delta measure. Linear corrections of the Hardy--Littlewood--Sobolev inequality at the endpoint are sometimes called Bourgain--Brezis inequalities. Usually, one adds translation and dilation invariant constraints for the function~$f$ that exclude the delta measures. We refer the reader to the pioneering paper~\cite{BB2004}, to more recent studies~\cite{GRV2024},~\cite{Stolyarov2022},~\cite{VanSchaftingen2013}, and to the surveys~\cite{Spector2020} and~\cite{VanSchaftingen2014}.

In~\cite{Maz'ya2010}, Vladimir Maz'ya suggested a modification of the Hardy--Littlewood--Sobolev inequality for~$p=1$ that allowed substitution of a delta measure (see also Problem~$5.1$ in the problem book~\cite{Maz'ya2018} as well). He conjectured that the inequality
\eq{\label{MazyaOriginal}
\Big|\int\limits_{\R^d} \Phi(\nabla u(x))\,dx\Big|\lesssim \|\Delta u\|_{L_1(\R^d)}^{d/(d-1)}
} 
holds true for all smooth compactly supported functions~$u$ with a uniform constant, provided the positively~$d/(d-1)$ homogeneous function~$\Phi\colon \R^d \to \R$ satisfies the cancellation condition
\eq{\label{Maz'jaCancellation}
\int\limits_{S^{d-1}}\Phi(\zeta)\,d\sigma(\zeta) = 0.
}
By~$\sigma$ we denote the Hausdorff measure on the unit sphere. The necessity of this condition follows from substitution of~$u$ that mimics the fundamental solution of the Laplace equation, i.e.~$\Delta u$ mimics a delta measure (see Section~\ref{S2} below for details on such arguments). By a positively~$q$-homogeneous function we mean a function~$\Psi$ defined on a Euclidean space such that~$\Psi(\lambda y) = \lambda^q \Psi(y)$ for any~$y$ and any~$\lambda \in \R_+$. Note that the inequality~\eqref{MazyaOriginal} is non-linear; there is no hint to convexity in it as well.  The cases where~$p=2$ and~$\Phi$ is a quadratic form were considered in~\cite{MS2009}, with sharp constants established. Maz'ya's conjecture was proved in~\cite{Stolyarov2021bis}. We cite the main result of that paper.
\begin{Th}\label{OldTheorem}
Let~$d$ be a natural number and let~$\alpha \in (0,d)$. Assume the kernel~$K\colon \R^d \to \R^\ell$ is positively~$(\alpha - d)$-homogeneous and Lipschitz on the unit sphere. Assume the function~$\Phi\colon \R^\ell \to \R$ is positively~$p$-homogeneous,~$p = d/(d-\alpha)$, and Lipschitz on the unit sphere. The inequality
\eq{
\Big|\int\limits_{\R^d} \Phi\Big(K*f(x)\Big)\,dx\Big|\lesssim \|f\|_{L_1(\R^d)}^{p}
}
holds true for all smooth compactly supported functions~$f$ with  zero mean if and only if
\eq{\label{CancellationOld}
\int\limits_{S^{d-1}}\Phi(K(\zeta))\,d\sigma(\zeta) = 0\quad \text{and}\quad \int\limits_{S^{d-1}}\Phi(-K(\zeta))\,d\sigma(\zeta) = 0.
}
\end{Th}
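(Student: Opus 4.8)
\medskip
\noindent\emph{Sketch of a proof of Theorem~\ref{OldTheorem}.}

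\emph{Necessity.} The plan is to test the inequality against functions imitating a point mass of a prescribed sign, compensated by a harmless spread-out mass so as to keep the zero-mean constraint. Fix a smooth $\phi\ge0$ supported in $B(0,1)$ with $\int\phi=1$ and a smooth $\psi\ge0$ of unit mass supported outside $B(0,2)$, and set $f_\eps=\eps^{-d}\phi(\cdot/\eps)-\psi$, so that $\int f_\eps=0$ and $\|f_\eps\|_{L_1}\le2$. Since $K*\psi$ is bounded on $B(0,1)$, since $K*(\eps^{-d}\phi(\cdot/\eps))(x)=K(x)+O(\eps|x|^{\alpha-d-1})$ for $|x|>2\eps$, and since the core $|x|<2\eps$ contributes $O(1)$ by the scaling identity $d+p(\alpha-d)=0$, one obtains
\[
\int_{\R^d}\Phi\bigl(K*f_\eps\bigr)=\int_{2\eps<|x|<1}\Phi\bigl(K(x)\bigr)\,dx+O(1)=\log\tfrac1\eps\int_{S^{d-1}}\Phi(K(\zeta))\,d\sigma(\zeta)+O(1),
\]
using that $\Phi\circ K$ is positively $(-d)$-homogeneous and that the errors are absorbed via $|\Phi(a+b)-\Phi(a)|\lesssim(|a|+|b|)^{p-1}|b|$ (valid because $\Phi$ is Lipschitz on the sphere and $p>1$) together with $(\alpha-d)(p-1)=-\alpha<0$. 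Boundedness forces $\int_{S^{d-1}}\Phi(K(\zeta))\,d\sigma(\zeta)=0$; repeating the computation with $-f_\eps$ (a spread-out mass minus a point mass) forces $\int_{S^{d-1}}\Phi(-K(\zeta))\,d\sigma(\zeta)=0$.

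\emph{Sufficiency, reductions.} Assume \eqref{CancellationOld}. The inequality is invariant under translations and dilations of $f$ (again because $p(d-\alpha)=d$), so we may normalize $\|f\|_{L_1}=1$ and $\supp f\subset B:=B(0,1)$; the goal becomes $\bigl|\int_{\R^d}\Phi(K*f)\bigr|\lesssim1$. On $\R^d\setminus B(0,2)$ the cancellation is not needed: using $\int f=0$ we write $K*f(x)=\int[K(x-y)-K(x)]f(y)\,dy$, whence $|K*f(x)|\lesssim|x|^{\alpha-d-1}$ and $\int_{\R^d\setminus B(0,2)}|\Phi(K*f)|\lesssim\int_{|x|\ge2}|x|^{-d-p}\,dx\lesssim1$. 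Everything reduces to the local bound $\bigl|\int_{B(0,2)}\Phi(K*f)\bigr|\lesssim1$, and here \eqref{CancellationOld} must be used, since on $B(0,2)$ the potential $K*f$ is only in weak $L_p$ and $\int_{B(0,2)}|\Phi(K*f)|$ may be infinite.

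\emph{Sufficiency, the local bound, and the main difficulty.} For the local bound I would run a stopping-time/Calder\'on--Zygmund decomposition of $f$ across dyadic scales: on each dyadic cube $Q$ carrying a concentrated portion of the mass of $f$, the potential $K*f$ coincides, up to an additive error that gains one power of the sidelength of $Q$, with a multiple $c_Q\,K(\cdot-x_Q)$ of a translate of $K$, with $\sum_Q|c_Q|\lesssim\|f\|_{L_1}=1$. Peeling off these profiles one scale at a time via $|\Phi(a+b)-\Phi(a)|\lesssim(|a|+|b|)^{p-1}|b|$, the main term $\Phi(c_QK(\cdot-x_Q))$, integrated over the range of scales on which it dominates, is annihilated by \eqref{CancellationOld} --- the integral of a $(-d)$-homogeneous function of vanishing spherical mean over any annulus is $0$, and the sign of $c_Q$ selects whichever of the two conditions applies --- while the accumulated remainders, thanks to $(\alpha-d)(p-1)=-\alpha$, form a convergent sum of size $O\bigl((\sum_Q|c_Q|)^p\bigr)=O(1)$. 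The mechanism is already visible in the model case of a dipole $f\rightsquigarrow c(\delta_{x_1}-\delta_{x_2})$: one has $\int_{\R^d}\Phi\bigl(c(K(\cdot-x_1)-K(\cdot-x_2))\bigr)\,dx=|c|^pC(e)$ with $e=(x_1-x_2)/|x_1-x_2|$, and $\sup_{e\in S^{d-1}}|C(e)|<\infty$, because near $x_1$ and $x_2$ the integrand is $\Phi$ of a $(-d)$-homogeneous profile (whose annular integrals vanish by \eqref{CancellationOld}) plus an $O(|x-x_i|^{-\alpha})$ remainder, integrable precisely because $\alpha<d$, while at infinity it is $O(|x|^{-d-p})$. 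The genuine difficulty --- and the only place where the full strength of the hypotheses enters --- is to marshal this cancellation simultaneously at all scales and all cubes so that the unavoidable remainders add up to $O(\|f\|_{L_1}^p)$ rather than to a logarithmically divergent quantity; this is where the interplay of the $(-d)$-homogeneity of $\Phi\circ K$, the vanishing of its two spherical means, and the Lipschitz regularity on the sphere is exploited.
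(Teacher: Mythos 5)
Your necessity argument is sound and is essentially the paper's own: approximate a (signed) point mass, keep the zero mean with a harmless far-away bump, and read off a $\log\frac1\eps$ divergence multiplied by the spherical mean of $\Phi\circ K$ (respectively of $\Phi\circ(-K)$), exactly as in Section~\ref{S2} here, where the error is absorbed via~\eqref{L65} and the $(-d)$-homogeneity of $\Phi\circ K$.

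The sufficiency half, however, has a genuine gap, and you in effect acknowledge it yourself. After the (correct) reduction to a local bound using $\int f=0$, your plan is to peel off concentrated profiles $c_Q K(\cdot-x_Q)$ scale by scale with the crude difference inequality $|\Phi(a+b)-\Phi(a)|\lesssim(|a|+|b|)^{p-1}|b|$ and then assert that the main terms die by~\eqref{CancellationOld} while ``the accumulated remainders form a convergent sum of size $O((\sum_Q|c_Q|)^p)$.'' That last assertion is precisely the theorem, not a consequence of the decomposition: summing the cross terms $|K_{\leq n}*f|^{p-1}|K_{n+1}*f|$ over all scales is exactly the place where a naive estimate produces the logarithmic divergence you mention, and avoiding it is the content of the heavy machinery the paper imports and adapts. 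Concretely, the argument (here and in the cited source) needs (i) the telescoping Lemma~\ref{Lemma32}, whose error term involves the function $\Me_p$ with its $\min(x^{p-1}y,xy^{p-1})$ structure for $p\in(1,2]$ --- not the symmetric bound you use --- together with the nontrivial multiplicative estimate~\eqref{MpEstimate} (Theorem~2.3 of the cited paper) to make the sum over scales converge; and (ii) a separate ``single-scale'' Besov-type bound $\sum_n\bigl|\int\Phi(K_n*f)\bigr|\lesssim\|f\|_{L_1}^p$, proved by a stopping-time construction on dyadic cubes with the quantities $\E_{Q,m}[f]$ and a geometric-decay lemma of the type of Lemma~\ref{SecondCore}, in which the cancellation~\eqref{CancellationOld} enters through the analogue of Lemma~\ref{MainLemma}. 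Your sketch names the right cancellation mechanism and the right dipole model computation, but it supplies neither (i) nor (ii); identifying ``the genuine difficulty'' is not the same as resolving it, so as written the proof of sufficiency is incomplete.
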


Theorem~\ref{OldTheorem} implies Maz'ya's conjecture~\eqref{MazyaOriginal} via the representation
\eq{\label{FormulaForGradient}
\nabla u(x) = c_d\int\limits_{\R^d}\frac{(x-y)\Delta u(y)}{|x-y|^{d}}\,dy,\qquad u\in C_0^\infty(\R^d),
}
here~$c_d$ is a certain constant. The classical Sobolev inequalities are valid for functions on domains. Since the companion Bourgain--Brezis inequalities were adjusted to this setting in~\cite{GR2019} and~\cite{GRV2024}, it is also desirable to find analogs of Maz'ya's~$\Phi$ inequalities for functions on domains. Here the main results are. In these theorems, we assume that~$K\colon \R^d \to \R^\ell$ and~$\Phi\colon \R^\ell \to \R$ are Lipschitz on the unit sphere and positively~$(\alpha- d)$ and~$p$ homogeneous, respectively. We always have the homogeneity relation~$p = d/(d-\alpha)$. The number~$\beta$ belongs to~$(0,1)$.

\begin{Th}\label{BoundedDomainTheorem}
Let~$\Omega \subset \R^d$ be a bounded domain whose boundary is~$C^{1,\beta}$ smooth. The inequality
\eq{\label{BoundedInequality}
\Big|\int\limits_{\Omega} \Phi\Big(K*f(x)\Big)\,dx\Big|\lesssim \|f\|_{L_1(\R^d)}^{p}
}
holds true for any bounded compactly supported function~$f$ on~$\R^d$ if and only if for any~$\xi \in S^{d-1}$
\eq{\label{CancellationNew}
\int\limits_{\genfrac{}{}{0pt}{-2}{\zeta \in S^{d-1},}{\scalprod{\zeta}{\xi} > 0}}\Phi(K(\zeta))\,d\sigma(\zeta) = 0\quad \text{and}\quad \int\limits_{\genfrac{}{}{0pt}{-2}{\zeta \in S^{d-1},}{\scalprod{\zeta}{\xi} > 0}}\Phi(-K(\zeta))\,d\sigma(\zeta) = 0.
}
\end{Th}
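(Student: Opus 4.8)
The plan is to reduce Theorem~\ref{BoundedDomainTheorem} to the already-established whole-space result, Theorem~\ref{OldTheorem}, by localization near the boundary. The \emph{necessity} direction should be the easier half: given $\xi \in S^{d-1}$, I would construct test functions $f$ concentrating mass near a boundary point $x_0 \in \partial\Omega$ whose outward normal is $\xi$ (such a point exists by compactness and $C^{1,\beta}$-smoothness). Taking $f$ to approximate a point mass at $x_0^\eps := x_0 - \eps\xi$ (just inside $\Omega$) or at $x_0 + \eps\xi$ (just outside), and rescaling, the convolution $K*f$ looks like $K(\fdot - x_0)$ and the integral $\int_\Omega \Phi(K*f)$ is, to leading order, $\eps^{\,d - p(d-\alpha)}$ times $\int_{H_\xi}\Phi(K(x - x_0))\,dx$ where $H_\xi$ is the halfspace approximating $\Omega$ near $x_0$; by the homogeneity relation $p = d/(d-\alpha)$ this exponent is $0$, and passing to polar coordinates converts the halfspace integral into $\int_{\scalprod{\zeta}{\xi}>0}\Phi(K(\zeta))\,d\sigma(\zeta)$ (the radial part $\int_0^\infty r^{d-1}|r|^{p(\alpha-d)}\,dr$ diverges, which is exactly why one must cut off and why the blow-up forces the cancellation). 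Repeating with $-f$ gives the second condition in~\eqref{CancellationNew}. The norm $\|f\|_{L_1}^p$ stays bounded under this scaling, so a nonzero value of either integral contradicts~\eqref{BoundedInequality}.

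For \emph{sufficiency}, the idea is a partition of unity: write $f = \sum_j f_j$ with one piece supported far from $\partial\Omega$ and finitely many pieces supported in small coordinate charts covering $\partial\Omega$. On the interior piece one can hope to use Theorem~\ref{OldTheorem} after subtracting the mean (the mean-value correction contributes a smooth, hence harmless, term since $K*\mathbf{1}$ is controlled on $\Omega$). On each boundary chart, after flattening the boundary via a $C^{1,\beta}$ diffeomorphism and freezing coefficients, $\Omega$ is locally comparable to a halfspace $H_\xi$; one then wants the halfspace version of the inequality, which is the $\Omega = $ halfspace case of the very theorem being proved — so really the halfspace case must be treated first, and the bounded case deduced from it. Thus the genuine core is: on a halfspace $H$, $|\int_H \Phi(K*f)| \lesssim \|f\|_{L_1}^p$ holds iff~\eqref{CancellationNew} holds for the single normal $\xi = \norm_H$. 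The natural route there is to mimic the proof of Theorem~\ref{OldTheorem} from~\cite{Stolyarov2021bis}, which presumably proceeds by reducing to $f$ a sum of point masses and estimating $\int_H \Phi(\sum_i a_i K(x - x_i))$; the cancellation~\eqref{CancellationNew} is exactly what kills the leading-order contribution of each individual point mass sitting near (or on) $\partial H$, while points deep inside $H$ see the full sphere and are governed by the old condition~\eqref{CancellationOld}, which~\eqref{CancellationNew} implies by summing the two halfspace integrals.

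The main obstacle I expect is the boundary-chart analysis in the sufficiency proof: controlling the error made when replacing the curved domain (and the frozen kernel) by a halfspace. Because $K$ is merely Lipschitz on the sphere and only positively homogeneous — not smooth — one cannot freely Taylor-expand; the $C^{1,\beta}$ regularity of $\partial\Omega$ gives a H\"older modulus $\beta$ for the normal, and one must check that the resulting perturbation of $K*f$ is small in a norm strong enough to be absorbed after applying $\Phi$ (which is only $p$-homogeneous and Lipschitz on the sphere, so $|\Phi(a) - \Phi(b)| \lesssim (|a| + |b|)^{p-1}|a - b|$, a bound that is delicate when $p < 2$, i.e. $\alpha > d/2$). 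Handling the interaction between far-apart point masses — one near the boundary, one in the interior — and showing their cross terms do not spoil the estimate is the other technical pressure point; here I would lean on the $p$-homogeneity to separate scales and on a geometric observation that $\dist(x,\partial\Omega)$-weighted estimates control the kernel's decay. Once the halfspace case is in hand, stitching the charts together is routine: finitely many charts, a fixed partition of unity, and the triangle inequality (using $p \geq 1$ so that $\sum \|f_j\|_{L_1}^p \lesssim \|f\|_{L_1}^p$ after grouping) close the argument.
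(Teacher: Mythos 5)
Your necessity sketch is essentially the paper's argument (concentrate a rescaled bump just inside the boundary at a point with normal $\xi$, compare $K*f$ with $K$, pass to polar coordinates), though note the divergence one actually extracts is logarithmic and requires splitting the radial range at $r\sim n^\gamma$, because for $r$ comparable to the dilation parameter the spherical cap $S^{d-1}\cap \frac{n}{r}\Omega$ is no longer a half-sphere; this is a fixable bookkeeping point. The sufficiency half, however, has genuine gaps, and the main one is structural: the functional $f\mapsto \int_\Omega\Phi(K*f)$ is nonlinear and sign-indefinite, so a partition of unity $f=\sum_j f_j$ does not give $\big|\int_\Omega\Phi(K*f)\big|\leq\sum_j\big|\int_\Omega\Phi(K*f_j)\big|$ plus harmless cross terms, and there is no monotonicity in the domain (the paper stresses exactly this: one cannot bound $\int_\Omega\Phi(K*f)$ by $\int_{\R^d}\Phi(K*f)$). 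For the same reason your interior piece cannot be handled by Theorem~\ref{OldTheorem}: even for $f$ supported deep inside $\Omega$, the convolution $K*f$ has a tail reaching $\partial\Omega$ and beyond, so the integral over $\Omega$ is not the integral over $\R^d$, and the ``subtract the mean'' correction again produces cross terms that the $p$-homogeneous $\Phi$ does not let you separate. Finally, the halfspace core of your plan is deferred to ``mimic the proof of Theorem~\ref{OldTheorem}'', but that is precisely where new work is needed: near the boundary the full-sphere cancellation~\eqref{CancellationOld} is useless and one must exploit~\eqref{CancellationNew} quantitatively, scale by scale; and the curved-to-flat reduction by a $C^{1,\beta}$ diffeomorphism destroys the convolution structure, after which the crude Lipschitz bound $|\Phi(a)-\Phi(b)|\lesssim(|a|+|b|)^{p-1}|a-b|$ is not summable, since the individual terms are exactly of the critical size where the endpoint HLS inequality fails.

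The paper avoids all of this by never splitting $f$ globally: it decomposes the kernel dyadically, $K=\sum_n K_n$ with $K_n$ supported at scale $2^{-n}$, telescopes $\Phi(K_{\leq n+1}*f)-\Phi(K_{\leq n}*f)$ (Lemma~\ref{Lemma32}, with the cross terms controlled by the $\Me_p$ estimate~\eqref{MpEstimate} imported from the whole-space paper), and reduces everything to the Besov-type bound $\sum_n\big|\int_\Omega\Phi(K_n*f)\big|\lesssim\|f\|_{L_1}^p$ (Theorem~\ref{BesovTheorem}). Because $K_n*f$ is supported at scale $2^{-n}$ around the support of $f$, one can split $f$ at each scale into pieces on cubes of side $2^{-n}$ (here the splitting identity works since the kernel is compactly supported at that scale). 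Interior cubes are handled by the whole-space local lemma; for a boundary cube one compares $f$ with a point mass at the nearest boundary point $y$ and uses~\eqref{CancellationNew2} for the halfspace with normal $\norm_y$, the $C^{1,\beta}$ regularity giving an error $O(2^{-\beta n})\|f\|_{L_1}^p$ per scale, which is summable (Lemma~\ref{MainBoundaryLemma}); the remaining first-moment terms over boundary cubes are summed by a new stopping-time argument with the boundary energies $\Eb$ (Lemma~\ref{SecondCore}). In short, your proposal identifies the right cancellation condition and the right heuristic (halfspace behaviour at the boundary), but the partition-of-unity/flattening route would not close, and the missing scale-by-scale boundary mechanism is the actual content of the paper's proof.
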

We note that the new cancellation condition~\eqref{CancellationNew} implies~\eqref{CancellationOld}. The main difference of the~$\Phi$-inequalities from classical estimates in the spirit of Sobolev is that there is no monotonicity of the estimated quantity with respect to domain. In other words, it is completely unclear how to bound~$\int_\Omega \Phi(K*f)$ with~$\int_{\R^d}\Phi(K*f)$; seemingly, there are no such bounds. In the classical case, however,~$\int_\Omega |f|^p \leq \int_{\R^d}|f|^p$.

We have also considered the case where~$\Omega$ is a half-space. 
\begin{Th}\label{HalfSpaceTheorem}
Fix~$\xi\in S^{d-1}$. Let~$\Omega  = \set{x\in \R^d}{\scalprod{x}{\xi} > 0}$. The inequality
\eq{\label{HalfSpaceInequality}
\Big|\int\limits_{\Omega} \Phi\Big(K*f(x)\Big)\,dx\Big|\lesssim \|f\|_{L_1(\R^d)}^{p}
}
holds true for any bounded compactly supported function~$f$ on~$\R^d$ with zero mean if and only if~\eqref{CancellationOld} and~\eqref{CancellationNew} hold true (with the specific vector~$\xi$). 
\end{Th}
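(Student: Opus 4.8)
The plan is to reduce the half-space statement to a combination of Theorem~\ref{OldTheorem} and Theorem~\ref{BoundedDomainTheorem} by exploiting scaling and localization, paying attention to the fact that now~$f$ is required to have zero mean. For the necessity direction I would run the standard concentration arguments alluded to in Section~\ref{S2}: to get~\eqref{CancellationOld} one plugs in a sequence~$f_n$ approximating a delta measure at an interior point of~$\Omega$ (together with a distant compensating bump to enforce zero mean, which contributes negligibly), so that~$K*f_n$ looks like~$K(\,\cdot - x_0\,)$ near~$x_0$; rescaling shows the left side behaves like~$r^{d}\cdot r^{(\alpha-d)p}\int_{S^{d-1}}\Phi(K(\zeta))\,d\sigma(\zeta)$ up to lower order, and since~$(\alpha-d)p + d = 0$ this forces~$\int_{S^{d-1}}\Phi(K(\zeta))\,d\sigma = 0$; the~$-K$ version comes from replacing~$f_n$ by~$-f_n$ (here the zero-mean hypothesis is harmless because the compensating mass can be sent to infinity). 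To obtain~\eqref{CancellationNew} with the specific~$\xi$ one instead concentrates a dipole-like configuration at a \emph{boundary} point: choosing~$f_n$ supported in a shrinking ball centered on~$\partial\Omega$ with the half of the Riesz potential's mass that "sees" $\Omega$ governed by the half-sphere~$\{\scalprod{\zeta}{\xi}>0\}$ (after flattening the boundary, which is exact for a half-space), the leading term becomes a multiple of~$\int_{\scalprod{\zeta}{\xi}>0}\Phi(\pm K(\zeta))\,d\sigma(\zeta)$, and it must vanish.

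For sufficiency, assume~\eqref{CancellationOld} and~\eqref{CancellationNew} hold. The main idea is a dichotomy on the scale and location of~$f$ relative to~$\partial\Omega$. Fix a bounded compactly supported~$f$ with zero mean; by scaling invariance of both sides (the left side scales as~$r^d\cdot r^{(\alpha-d)p}=r^0$ under~$x\mapsto rx$, matching~$\|f\|_{L_1}^p$) we may normalize, say, $\diam(\supp f)=1$. Split~$\R^d$ into the region~$\Omega_{\mathrm{near}}$ within bounded distance of~$\supp f$ and the far region~$\Omega_{\mathrm{far}}=\Omega\setminus\Omega_{\mathrm{near}}$. On~$\Omega_{\mathrm{far}}$ one uses the zero-mean cancellation of~$f$: $K*f(x)=\int (K(x-y)-K(x-y_0))f(y)\,dy$ has the improved decay~$|K*f(x)|\lesssim \|f\|_{L_1}|x|^{\alpha-d-1}$, so~$|\Phi(K*f(x))|\lesssim \|f\|_{L_1}^p |x|^{(\alpha-d-1)p}$, and since~$(\alpha-d-1)p<-d$ this is integrable over~$\Omega_{\mathrm{far}}$, giving the bound~$\lesssim\|f\|_{L_1}^p$ there directly. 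On~$\Omega_{\mathrm{near}}$, if~$\supp f$ is far from~$\partial\Omega$ (distance~$\gg 1$) then~$\Omega_{\mathrm{near}}$ is an honest Euclidean ball and we apply Theorem~\ref{OldTheorem} after extending the integral to all of~$\R^d$ and subtracting the far tail as above; this uses only~\eqref{CancellationOld}. If instead~$\supp f$ is within~$O(1)$ of~$\partial\Omega$, then after a translation we are integrating over a half-space-like region near the origin, and here we invoke Theorem~\ref{BoundedDomainTheorem}: indeed~\eqref{CancellationNew} is precisely the hypothesis of that theorem, and a half-space can be realized as a limit of large balls with~$C^{1,\beta}$ boundary, or alternatively one checks directly that the proof of Theorem~\ref{BoundedDomainTheorem} localizes to the boundary-point model, where~$\Omega$ is exactly a half-space; either way the relevant estimate is~$\lesssim\|f\|_{L_1}^p$ with the right cancellation already assumed.

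The main obstacle I anticipate is making the dichotomy quantitative and \emph{uniform}: the splitting scale separating "near the boundary" from "far from the boundary" must be chosen so that the two regimes overlap, and one needs the constants from Theorems~\ref{OldTheorem} and~\ref{BoundedDomainTheorem} to combine without accumulating. Concretely, when~$\supp f$ sits at an intermediate distance~$\sim R$ from~$\partial\Omega$ with~$1\ll R$, one must interpolate: decompose~$\Omega$ into a ball~$B(x_0,cR)$ contained in~$\Omega$ (handled by Theorem~\ref{OldTheorem}, contributing~$\lesssim\|f\|_{L_1}^p$) plus the complement inside~$\Omega$, on which~$K*f$ is already in the far-decay regime because~$|x-y|\gtrsim R\gg\diam\supp f$; the zero-mean bound then controls the complement by~$\lesssim\|f\|_{L_1}^p R^{(\alpha-d-1)p+d}\lesssim\|f\|_{L_1}^p$. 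So in fact the genuinely boundary-adapted condition~\eqref{CancellationNew} is only needed in the truly critical scale~$R\sim 1$, while all other scales are absorbed by~\eqref{CancellationOld} plus the zero-mean decay—this is the mechanism that explains why both conditions appear in the statement, and verifying it cleanly, together with a careful treatment of the~$C^{1,\beta}$-to-flat reduction inside the proof of Theorem~\ref{BoundedDomainTheorem} at a boundary point, is where the real work lies.
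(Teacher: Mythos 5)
Your necessity sketch follows the same concentration idea as the paper (a shrinking bump at an interior point, resp.\ at a boundary point, plus a fixed compensating bump to restore zero mean), though note that the contradiction is not a single-scale scaling identity: since $\Phi(K(\cdot))$ is $(-d)$-homogeneous, the relevant quantity diverges only logarithmically in the number of scales, and one must run the multi-scale bookkeeping of Section~\ref{S2} (the $\gamma$-splitting of the radial integral) rather than "send the compensating mass to infinity''.

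The sufficiency argument, however, has a genuine gap at exactly the critical regime. Your dichotomy correctly disposes of the cases where $\supp f$ (normalized to unit diameter) is at distance $\gg 1$ from $\partial\Omega$: there Theorem~\ref{OldTheorem} plus the zero-mean decay $|K*f(x)|\lesssim\|f\|_{L_1}|x|^{\alpha-d-1}$ does the job, using only~\eqref{CancellationOld}. But when $\supp f$ sits within $O(1)$ of the boundary you cannot invoke Theorem~\ref{BoundedDomainTheorem} as a black box: its hypothesis is~\eqref{CancellationNew} for \emph{every} $\xi\in S^{d-1}$ (every normal direction of the bounded domain occurs in its proof), whereas Theorem~\ref{HalfSpaceTheorem} grants it only for the single normal $\xi$ of the half-space. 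Approximating the half-space by large balls does not repair this — the balls have normals in all directions, so the hypothesis of Theorem~\ref{BoundedDomainTheorem} is simply not available under the assumptions of Theorem~\ref{HalfSpaceTheorem}; likewise, any bounded "half-ball'' truncation $\Omega\cap B$ reintroduces non-flat boundary with other normals. Your fallback, "one checks directly that the proof localizes to the half-space model'', is precisely the content that must be supplied, and it is what the paper actually does: it reruns the machinery (the telescoping~\eqref{Telescopic}, Lemma~\ref{Lemma32}, the $\Me_p$ bound~\eqref{MpEstimate}, and a half-space version of Theorem~\ref{BesovTheorem}), using dilation invariance to place $\supp f$ in the unit ball, observing that in Lemma~\ref{MainBoundaryLemma} only the normal $\xi=(0,\ldots,0,1)$ ever appears because the boundary is flat, and using the zero-mean hypothesis (via Lemma~$2.4$ of~\cite{Stolyarov2021bis}) to control the low-frequency term $K_{\leq 0}*f$, since Lemma~\ref{LowFrequency} as stated requires $\Omega$ bounded. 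Without carrying out this half-space version of the Besov-type estimate, the critical regime of your dichotomy remains unproved.
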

The requirement that the boundary of~$\Omega$ is~$C^{1,\beta}$ smooth in Theorem~\ref{BoundedDomainTheorem} means the following. In a neighborhood of any boundary point,~$\Omega$ coincides with the subgraph of a differentiable function whose gradient is~$\beta$-H\"older. In particular, if we choose the coordinates in the way that in a neighborhood of the origin
\eq{
\Omega = \set{x\in\R^d}{x_d \geq h(x_1,x_2,\ldots,x_{d-1})},\quad  h(0)=0, \text{ and } \nabla h(0) = 0,
} 
then
\eq{\label{Holder}
|h(y)| \leq C|y|^{1+\beta}
}
for sufficiently small~$y\in \R^{d-1}$ and an absolute constant~$C = C(\Omega)$.

The assumption that~$f$ has zero mean appearing in the Theorems~\ref{OldTheorem} and~\ref{HalfSpaceTheorem} is necessary, since without them~$K*f$ decays as~$|x|^{\alpha - d}$ at infinity, and there is no hope for the bound in question unless we regularize the integral on the left hand side of the inequality; this requirement might be thought of as a condition of compact support for functions in Sobolev inequalities.  The proofs of the two theorems above rely on the circle of ideas developed in~\cite{Stolyarov2021bis} and~\cite{Stolyarov2021} and form, in a sense, a natural continuation of that papers. We also present corollaries in the spirit of the original Maz'ya's setting~\eqref{MazyaOriginal}.

\begin{Cor}\label{HarmonicCorollary}
Let~$\Phi\colon \R^d \to \R$ be a positively~$d/(d-1)$-homogeneous function whose restriction to the unit sphere is Lipschitz. Assume for any~$\xi\in \R^d \setminus \{0\}$ the cancellation condition
\eq{
\int\limits_{\genfrac{}{}{0pt}{-2}{\zeta \in S^{d-1},}{\scalprod{\zeta}{\xi} > 0}}\Phi(\zeta)\, d\sigma(\zeta) = 0
}
holds true. Then, for any bounded domain~$\Omega$ with smooth boundary and any smooth function~$u\colon \bar\Omega \to \R$, the estimate
\eq{\label{eq114}
\Big|\int\limits_\Omega\Phi(\nabla u(x))\,dx\Big|\lesssim \|\Delta u\|_{L_1(\Omega)} + \Big\|\frac{\partial u}{\partial n}\Big\|_{L_1(\partial \Omega)}
}
is also true with a uniform constant.
\end{Cor}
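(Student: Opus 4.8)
The plan is to deduce Corollary~\ref{HarmonicCorollary} from Theorem~\ref{BoundedDomainTheorem} by producing, from the smooth function~$u$ on~$\bar\Omega$, a bounded compactly supported function~$f$ on~$\R^d$ whose convolution with the kernel~$K(x) = c_d x/|x|^d$ reproduces~$\nabla u$ inside~$\Omega$, up to a harmless error. The model is formula~\eqref{FormulaForGradient}: if we extend~$u$ to a compactly supported~$\tilde u \in C_0^\infty(\R^d)$ and set~$f = \Delta \tilde u$, then~$K * f = \nabla \tilde u$ on all of~$\R^d$, hence equals~$\nabla u$ on~$\Omega$, and Theorem~\ref{BoundedDomainTheorem} (whose cancellation hypothesis~\eqref{CancellationNew} with~$K(\zeta)=\zeta$ is exactly the one assumed here) yields~$|\int_\Omega \Phi(\nabla u)| \lesssim \|\Delta\tilde u\|_{L_1(\R^d)}^{d/(d-1)}$. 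The whole issue is therefore to choose the extension~$\tilde u$ so that~$\|\Delta \tilde u\|_{L_1(\R^d)} \lesssim \|\Delta u\|_{L_1(\Omega)} + \|\partial u/\partial n\|_{L_1(\partial\Omega)}$, after which one removes the exponent~$p=d/(d-1)$ from the right-hand side of~\eqref{eq114} by the usual homogenization trick (the inequality is invariant under~$u \mapsto \lambda u$, so a $p$-homogeneous bound on both sides is equivalent to the linear one; alternatively rescale~$u$ to have unit norm).

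The construction of~$\tilde u$ is the heart of the matter. First extend~$u$ from~$\bar\Omega$ to a function~$u_1 \in C^\infty(\R^d)$ (a standard Whitney/Stein extension, using the smoothness of~$\partial\Omega$), then multiply by a fixed cutoff~$\chi \in C_0^\infty(\R^d)$ that is~$1$ on a neighborhood of~$\bar\Omega$, giving~$\tilde u = \chi u_1 \in C_0^\infty(\R^d)$. On~$\Omega$ we have~$\tilde u = u$, so~$\nabla \tilde u = \nabla u$ there, which is all we need for the left-hand side. For the right-hand side we must estimate~$\int_{\R^d}|\Delta \tilde u|$. Split this into the integral over~$\Omega$, which is just~$\|\Delta u\|_{L_1(\Omega)}$, and the integral over~$\R^d \setminus \Omega$, which (since~$\tilde u$ is supported in a bounded set) reduces to an integral over a bounded collar region~$U \setminus \Omega$ where~$U \supset \bar\Omega$. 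The key point: distributionally, $\Delta \tilde u$ on~$\R^d$ picks up a single-layer term along~$\partial\Omega$ carrying the jump in the normal derivative, plus the pointwise Laplacian on each side. If we arrange the extension so that~$\tilde u$ and its full gradient are continuous across~$\partial\Omega$ (matching Cauchy data~$u|_{\partial\Omega}$ and~$\nabla u|_{\partial\Omega}$, which a first-order Whitney extension does), then there is no single-layer distributional mass, and~$\Delta\tilde u$ is an honest~$L_1$ function; its~$L_1$ norm over the collar is controlled by~$\|\tilde u\|_{C^2}$ on the collar, which the extension operator bounds by~$\|u\|_{C^2(\bar\Omega)}$. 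That, however, gives a~$C^2$ bound, not the desired~$\|\Delta u\|_{L_1(\Omega)} + \|\partial u/\partial n\|_{L_1(\partial\Omega)}$, so a cruder extension will not do.

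The main obstacle, then, is to build an extension whose~$L_1$ Laplacian norm is controlled by the \emph{$L_1$} norms of~$\Delta u$ and~$\partial u/\partial n$ only, not by higher Sobolev or~$C^k$ norms of~$u$. I expect to handle this by a localized, explicit construction near~$\partial\Omega$ using the signed distance function~$\rho(x) = \operatorname{dist}(x,\Omega)$, which is~$C^{1,\beta}$ near the smooth boundary: one writes~$\tilde u(x) = u(\pi(x)) + \rho(x)\,\frac{\partial u}{\partial n}(\pi(x))\,\eta(\rho(x))$ in a tubular neighborhood, where~$\pi$ is the nearest-point projection to~$\partial\Omega$ and~$\eta$ a fixed cutoff in the normal variable, so that the Cauchy data match to first order. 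Computing~$\Delta\tilde u$ in Fermi coordinates~$(\pi, \rho)$, the terms fall into two groups: those involving two derivatives of~$u$ along~$\partial\Omega$ —which must be re-expressed, using that~$u$ solves nothing in particular, via the identity~$\Delta_{\partial\Omega} u = \Delta u - \partial^2_\rho u - H \partial_\rho u$ on the boundary, trading tangential second derivatives for~$\Delta u|_{\partial\Omega}$ and normal derivatives— and those linear in~$\partial u/\partial n$ and~$u|_{\partial\Omega}$ with coefficients built from the bounded geometry of~$\partial\Omega$. Integrating~$d\rho$ over the fixed-width collar then converts boundary~$L_1$ norms into the stated right-hand side, the~$C^{1,\beta}$ hypothesis entering only to guarantee~$\rho,\pi$ have enough regularity for these manipulations to be legitimate. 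The extension~$u|_{\partial\Omega}$ and its boundary integrals are themselves dominated by~$\|\Delta u\|_{L_1(\Omega)}+\|\partial u/\partial n\|_{L_1(\partial\Omega)}$ via elliptic estimates (trace of a function with~$L_1$ Laplacian and~$L_1$ Neumann data), closing the argument.
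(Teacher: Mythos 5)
Your reduction (extend $u$, set $f=\Delta\tilde u$, apply Theorem~\ref{BoundedDomainTheorem} through~\eqref{FormulaForGradient}) is the same as the paper's, and you correctly isolate the crux: an extension whose Laplacian has total mass $\lesssim \|\Delta u\|_{L_1(\Omega)}+\|\partial u/\partial n\|_{L_1(\partial\Omega)}$. But the extension you propose does not satisfy this bound, and no first-order Whitney/Fermi-coordinate extension can. Writing $\tilde u(x)=u(\pi(x))+\rho(x)\,\partial_n u(\pi(x))\,\eta(\rho(x))$ in a collar forces $\Delta\tilde u$ to contain the tangential Laplacians $\Delta_{\partial\Omega}\big(u|_{\partial\Omega}\big)(\pi(x))$ and $\rho\,\Delta_{\partial\Omega}\big(\partial_n u\big)(\pi(x))$, i.e.\ two tangential derivatives of the Dirichlet and Neumann traces; these are order-$2$ and order-$3$ quantities of the boundary data, whereas the right-hand side of~\eqref{eq114} is an order-$1$ quantity. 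Concretely, on the unit ball take $u=r^kY_k$ a solid spherical harmonic of degree $k$: then $\|\Delta u\|_{L_1(\Omega)}=0$ and $\|\partial u/\partial n\|_{L_1(\partial\Omega)}\sim k\,\|Y_k\|_{L_1(S^{d-1})}$, while your explicit $\tilde u$ has $\|\Delta\tilde u\|_{L_1(\mathrm{collar})}\gtrsim k^2\|Y_k\|_{L_1(S^{d-1})}$ from the $\Delta_{\partial\Omega}u$ term (and a $k^3$ contribution from the Neumann part). The identity $\Delta_{\partial\Omega}u=\Delta u-\partial_\rho^2u-H\partial_\rho u$ cannot rescue this: your extension is an explicit function and its Laplacian simply is that large; moreover the traded quantities ($\Delta u|_{\partial\Omega}$ and the trace of $\partial_\rho^2u$) are boundary traces, which are not controlled by the interior $L_1$ norm $\|\Delta u\|_{L_1(\Omega)}$. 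The same objection undercuts your closing appeal to ``elliptic estimates'' for the boundary integrals of $u|_{\partial\Omega}$. (A smaller slip: the linear right-hand side is not equivalent to the $p$-homogeneous one by scaling, since under $u\mapsto\lambda u$ the two sides of~\eqref{eq114} scale differently; scaling only normalizes the data and yields the right-hand side to the power $d/(d-1)$, as in~\eqref{MazyaOriginal}.)

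The paper's Proposition~\ref{ExtensionTheorem} avoids this trap by \emph{not} matching the Cauchy data: it solves the exterior Dirichlet problem with data $u|_{\partial\Omega}$, glues the exterior harmonic function $U$ to $u$, and cuts off. The price is that $\Delta v$ is only a measure, with a single-layer part on $\partial\Omega$ of mass $\int_{\partial\Omega}\big|\partial u/\partial n\big|+\int_{\partial\Omega}\big|\partial U/\partial n\big|$ (this is why Theorem~\ref{BoundedDomainTheorem} must be fed measure data); the gain is that one only needs the order-$1$ estimate $\|\partial U/\partial n\|_{L_1(\partial\Omega)}\lesssim\|\Delta u\|_{L_1(\Omega)}+\|\partial u/\partial n\|_{L_1(\partial\Omega)}$. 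That estimate is the genuinely nontrivial ingredient, proved via Lemma~\ref{CannyTrick} (Green's formula, duality, maximum principle) together with the fact that the interior and exterior Dirichlet-to-Neumann operators share the principal symbol $\sqrt{\Delta_{\partial\Omega}}$, so their difference has order $-1$ and is bounded on $L_1$ of the compact boundary. These steps — the harmonic exterior extension, the DN-operator comparison, and the passage to measure-valued $f$ — are precisely what is missing from your sketch, and they cannot be replaced by the local collar construction you describe.
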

We have used the notation~$\bar\Omega$ to denote the closure of~$\Omega$. The boundary~$\partial \Omega$ is equipped with the Hausdorff measure of dimension~$d-1$. The second summand on the right hand side of~\eqref{eq114} is unavoidable as it can be seen from considering harmonic functions~$u$.
\begin{Cor}\label{HarmonicHalfspaceCorollary}
Let~$\Phi\colon \R^d \to \R$ be a positively~$d/(d-1)$-homogeneous function whose restriction to the unit sphere is Lipschitz. Let~$\xi \in S^{d-1}$. Assume the cancellation condition
\eq{\label{SpecificCancellation}
\int\limits_{\genfrac{}{}{0pt}{-2}{\zeta \in S^{d-1},}{\scalprod{\zeta}{\xi} > 0}}\Phi(\zeta)\, d\sigma(\zeta) = 0
}
holds true. Then, the estimate
\eq{
\Big|\int\limits_\Omega\Phi(\nabla u(x))\,dx\Big|\lesssim \|\Delta u\|_{L_1(\Omega)} + \Big\|\frac{\partial u}{\partial n}\Big\|_{L_1(\partial \Omega)}
}
is also true for any compactly supported smooth function~$u\colon \Omega \to \R$ with a uniform constant, where~$\Omega$ is the halfspace~$\set{x\in \R^d}{\scalprod{x}{\xi} > 0}$.
\end{Cor}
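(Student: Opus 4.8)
The plan is to reduce Corollary~\ref{HarmonicHalfspaceCorollary} to the half-space Theorem~\ref{HalfSpaceTheorem}, applied with $\alpha=1$, $p=d/(d-1)$, and the kernel $K(z)=c_d z/|z|^d$ of order $1-d$ figuring in~\eqref{FormulaForGradient}; this $K$ is smooth on the unit sphere, and $\Phi$ is $p$-homogeneous by assumption. The bridge between the function $u$ on $\Omega$ and a convolution $K*f$ on $\R^d$ is a Green-type representation of $\nabla u$ in which the double-layer term is absent. Let $R$ be the reflection across the hyperplane $\partial\Omega=\set{x\in\R^d}{\scalprod{x}{\xi}=0}$, and let $\tilde u$ be the even extension of $u$ across $\partial\Omega$, i.e.\ $\tilde u=u$ on $\bar\Omega$ and $\tilde u(x)=u(Rx)$ for $x\notin\Omega$. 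Since $u$ is smooth up to the boundary and has compact support, $\tilde u$ is Lipschitz with compact support, and its distributional Laplacian is the finite signed measure
\[
\mu=(\Delta u)\,\mathbf 1_\Omega\,\mathcal L^d+\bigl((\Delta u)\circ R\bigr)\,\mathbf 1_{\R^d\setminus\Omega}\,\mathcal L^d-2\,\frac{\partial u}{\partial n}\,\sigma_{\partial\Omega},
\]
the surface term coming from the jump of the normal derivative of $\tilde u$ across $\partial\Omega$; as $\tilde u$ is continuous, there is no double-layer contribution. Consequently $\tilde u$ is the Newtonian potential of $\mu$, so that $\nabla u=\nabla\tilde u=K*\mu$ on $\Omega$ by~\eqref{FormulaForGradient} (the identity persisting for such $\tilde u$ by approximation). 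Moreover $\|\mu\|\le 2\|\Delta u\|_{L_1(\Omega)}+2\bigl\|\frac{\partial u}{\partial n}\bigr\|_{L_1(\partial\Omega)}$, and $\int_{\R^d}d\mu=0$ because the reflection preserves the integral of $(\Delta u)\mathbf 1_\Omega$, while the divergence theorem gives $\int_\Omega\Delta u=\int_{\partial\Omega}\frac{\partial u}{\partial n}$.

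It remains to feed the measure $\mu$ into Theorem~\ref{HalfSpaceTheorem} in place of $f$. On the one hand, the cancellation conditions~\eqref{CancellationOld} and~\eqref{CancellationNew} for the pair $(K,\Phi)$ and the given $\xi$ reduce to the cancellation of $\Phi$ over $S^{d-1}$ and over the half-spheres determined by $\xi$: since $K(\zeta)=c_d\zeta$ on $S^{d-1}$ with $c_d\ne0$, the $p$-homogeneity of $\Phi$ gives $\Phi(\pm K(\zeta))=|c_d|^p\Phi(\pm\zeta)$, and~\eqref{SpecificCancellation} supplies precisely what is needed. On the other hand, Theorem~\ref{HalfSpaceTheorem} is stated for bounded compactly supported $f$ with zero mean, so one passes to measures by mollification: set $\mu_\eps=\mu*\rho_\eps$ for a standard mollifier $\rho_\eps$; then $\mu_\eps\in C_0^\infty(\R^d)$, $\int\mu_\eps=0$, $\|\mu_\eps\|_{L_1(\R^d)}\to\|\mu\|$, and $K*\mu_\eps=(K*\mu)*\rho_\eps$. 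The field $K*\mu$ is bounded on $\R^d$ (its bulk part is continuous, its single-layer part is bounded with a jump across $\partial\Omega$) and decays like $|x|^{-d}$ at infinity thanks to $\int d\mu=0$, so $\Phi(K*\mu)\in L_1(\R^d)$ and, by dominated convergence, $\int_\Omega\Phi(K*\mu_\eps)\to\int_\Omega\Phi(K*\mu)$. Applying Theorem~\ref{HalfSpaceTheorem} to each $\mu_\eps$ and passing to the limit,
\[
\Bigl|\int_\Omega\Phi(\nabla u)\Bigr|=\Bigl|\int_\Omega\Phi(K*\mu)\Bigr|\lesssim\|\mu\|^{d/(d-1)}\lesssim\Bigl(\|\Delta u\|_{L_1(\Omega)}+\Bigl\|\tfrac{\partial u}{\partial n}\Bigr\|_{L_1(\partial\Omega)}\Bigr)^{d/(d-1)},
\]
which is the asserted estimate.

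The representation formula and the homogeneity bookkeeping are standard (classical potential theory on a half-space). The only point demanding care is the passage to measures: because the inequality is genuinely nonlinear, $\mu$ cannot be split into its bulk and surface parts and bounded termwise, so the mollification must be performed on $\mu$ as a whole, and one must verify that $\int_\Omega\Phi(K*\mu)$ is absolutely convergent and stable under this mollification even though $K*\mu$ — in contrast with $K*f$ for bounded $f$ — is merely locally bounded and has a jump across $\partial\Omega$. Once this compatibility of limits is secured, the proof closes.
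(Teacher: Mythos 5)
Your route is the paper's own: reflect $u$ evenly across $\partial\Omega$, note that $\Delta\tilde u$ is a compactly supported measure with total variation at most $2\big(\|\Delta u\|_{L_1(\Omega)}+\|\partial u/\partial n\|_{L_1(\partial\Omega)}\big)$, write $\nabla\tilde u=K*\Delta\tilde u$ via~\eqref{FormulaForGradient}, and invoke Theorem~\ref{HalfSpaceTheorem}; your mollification argument and the zero-mean check are exactly the ``details left to the reader'' in the paper, and they are carried out correctly.

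The one step that does not hold as written is the cancellation bookkeeping, and it is precisely the place where the paper itself is terse. Theorem~\ref{HalfSpaceTheorem} requires \eqref{CancellationOld} together with \emph{both} halves of \eqref{CancellationNew}. For $K(\zeta)=c_d\zeta$, the half of \eqref{CancellationNew} involving $\Phi(-K(\zeta))$ becomes, after the substitution $\zeta\mapsto-\zeta$, the cancellation of $\Phi$ over the opposite half-sphere $\set{\zeta\in S^{d-1}}{\scalprod{\zeta}{\xi}<0}$, and \eqref{CancellationOld} becomes the cancellation of $\Phi$ over the whole sphere; neither follows from \eqref{SpecificCancellation} alone (consider $\Phi(y)=|y|^{p-1}\max(-\scalprod{y}{\xi},0)$, which satisfies \eqref{SpecificCancellation} but has strictly positive integral over $S^{d-1}$). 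The reflection symmetry of $\Delta\tilde u$ cannot compensate for this: testing with $u$ that mimics the fundamental solution centered at a point deep inside $\Omega$, as in the necessity arguments of Section~\ref{S2}, makes the left-hand side grow like $\log(1/\eps)\cdot\int_{S^{d-1}}\Phi\,d\sigma$ while the right-hand side stays bounded, so the whole-sphere cancellation is genuinely needed for the estimate, and the interior cubes in the proof of Theorem~\ref{BesovTheorem} indeed use exactly \eqref{CancellationOld}. Hence your sentence ``\eqref{SpecificCancellation} supplies precisely what is needed'' is a gap: to close the argument one must additionally assume the cancellation over $\set{\zeta\in S^{d-1}}{\scalprod{\zeta}{\xi}<0}$ (equivalently, \eqref{SpecificCancellation} for $-\xi$ as well, which combined with \eqref{SpecificCancellation} yields \eqref{CancellationOld}). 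The paper's one-line claim that \eqref{CancellationNew} ``reduces to'' \eqref{SpecificCancellation} elides the same point, so apart from making this hypothesis explicit your argument coincides with the intended proof.
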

\begin{Rem}
The cancellation conditions in the two corollaries above are not only sufficient, but also necessary. The proof of this assertion is similar to the proofs of necessity in Theorems~\ref{BoundedDomainTheorem} and~\ref{HalfSpaceTheorem} presented in Section~\ref{S2} below.
\end{Rem}
\begin{Rem}
Though the smoothness assumptions on the boundary of~$\Omega$ in Corollary~\ref{HarmonicCorollary} seem superfluous, our proof uses the theory of pseudodifferential operators, and so we do not see and immediate way to remove them.
\end{Rem}
While Corollary~\ref{HarmonicHalfspaceCorollary} follows from Theorem~\ref{HalfSpaceTheorem} almost immediately, the derivation of Corollary~\ref{HarmonicCorollary} from Theorem~\ref{BoundedDomainTheorem} requires some efforts. The strategy is to extend~$u$ to a compactly supported function on~$\R^d$ in such a way that the~$L_1$-norm of the Laplacian is controlled by the right hand side of~\eqref{eq114} and then apply Theorem~\ref{BoundedDomainTheorem} via~\eqref{FormulaForGradient}. A similar strategy was used in~\cite{GRV2024} in the context of classical Bourgain--Brezis inequalities. Here we need a specific extension theorem, see Proposition~\ref{ExtensionTheorem}.

In the forthcoming Section~\ref{S2}, we prove the necessity of cancellation conditions in Theorems~\ref{BoundedDomainTheorem} and~\ref{HalfSpaceTheorem}. Section~\ref{S3} is devoted to reduction of our inequalities to Theorem~\ref{BesovTheorem} below; the latter theorem might be informally thought of as the Besov space version of Maz'ya's inequalities. Section~\ref{S4} contains the proof of the latter theorem. The concluding Section~\ref{S5} finishes the proofs of Theorems~\ref{BoundedDomainTheorem} and~\ref{HalfSpaceTheorem} and provides derivation Corollaries~\ref{HarmonicCorollary} and~\ref{HarmonicHalfspaceCorollary}.

I wish to thank Vladimir Maz'ya for asking me the question that motivated the paper (the question was whether Corollary~\ref{HarmonicCorollary} holds true). I am also grateful to Alexander Nazarov, Mikhail Novikov, Bogdan Rai\c{t}\u{a}, and Alexander Tyulenev for discussions concerning extension of smooth functions (results around Proposition~\ref{ExtensionTheorem}).  

\section{Necessity}\label{S2}
We first verify the necessity of~\eqref{CancellationNew} in Theorem~\ref{BoundedDomainTheorem} and then explain the modifications needed to justify necessity in Theorem~\ref{HalfSpaceTheorem}. 

Let~$\xi\in S^{d-1}$. Choose a point~$z_\xi\in \partial \Omega$ such that the inward pointing normal vector to~$\partial \Omega$ at~$z_\xi$ equals~$\xi$. For example, we may choose~$z_\xi$ as follows: let it be the point~$z\in \cl\Omega$ such that~$\scalprod{z}{\xi}$ is the smallest among all~$z\in \Omega$. By rotating and shifting~$\Omega$, we may, without loss of generality, assume~$\xi = (0,0,\ldots,0,1)$ and~$z_\xi$ is the origin. 

Let~$f_0$ be a smooth function with unit integral supported in the unit ball. Consider its dilations~$f_n$:
\eq{
f_n(x) = n^d f(nx),\qquad x\in \R^d,\ n\in \N.
}
We will be using the homogeneity relation
\eq{\label{HomKer}
K*f_n(x) = n^{d-\alpha}K*f(nx),\qquad x\in \R^d,
}
and the asymptotic formula
\eq{\label{AsymptoticFormula}
K*f(x) = K(x) + O(|x|^{\alpha - d - 1}),\qquad x\to \infty.
}
This formula follows from the homogeneity and smoothness assumptions on the kernel~$K$. Recall Lemma~$6.5$ in~\cite{Stolyarov2021bis}:
\eq{\label{L65}
\big|\Phi(a+b) - \Phi(a)\big| \lesssim |a|^{p-1}|b|,\qquad\text{provided}\quad 2|b|\leq |a|,\quad a,b\in\R^\ell.
}
Then, by~\eqref{AsymptoticFormula} and~\eqref{L65},
\eq{\label{eq24}
\Phi(K*f(x)) = \Phi(K(x)) + O\Big(|K(x)|^{p-1}|x|^{\alpha - d- 1}\Big) = \Phi(K(x)) + O\Big(|x|^{-d- 1}\Big),\qquad x\to \infty.
}
We plug the function~$x\mapsto f_n(x_1,x_2,\ldots, x_{d-1}, x_d-2/n)$ into~\eqref{BoundedInequality}; note that such a function is supported in~$\Omega$ provided~$n$ is sufficiently large. Then, by~\eqref{HomKer}, the left hand side of~\eqref{BoundedInequality} equals
\eq{\label{eq25}
\Big|\int\limits_{n\Omega}\Phi(K*f(x_1,x_2,\ldots,x_{d-1},x_d - 2))\,dx\Big|.
}
Let~$B_r(x)$ be the open Euclidean ball of radius~$r$ centered at~$x$. Note that~$K*f$ is a continuous function. Therefore, by the asymptotic formula~\eqref{eq24}, the integral~\eqref{eq25} differs by a uniformly (w.r.t.~$n$) bounded quantity from
\eq{\label{eq26}
\Big|\int\limits_{n\Omega\setminus B_4(0)}\Phi(K(x))\,dx\Big|.
}
Now let
\eq{
\I= \int\limits_{\genfrac{}{}{0pt}{-2}{\zeta \in S^{d-1}}{\zeta_d > 0}}\Phi(K(\zeta))\,d\sigma(\zeta);\qquad \M=\int\limits_{\zeta \in S^{d-1}}\Big|\Phi(K(\zeta))\Big|\,d\sigma(\zeta).
}
We wish to show~$\I = 0$, this will prove the necessity of the first identity in~\eqref{CancellationNew} (the necessity of the second is obtained by plugging a similar function generated by~$f$ whose integral is~$-1$). We rewrite~\eqref{eq26} using a change of variables and homogeneity:
\eq{\label{eq29}
\int\limits_{n\Omega\setminus B_4(0)}\Phi(K(x))\,dx = \int\limits_4^\infty r^{d-1}\!\!\!\int\limits_{S^{d-1}\cap\frac{n}{r}\Omega}\Phi(K(r\zeta))\,d\sigma(\zeta)\,dr
= \int\limits_4^\infty r^{-1}\!\!\!\int\limits_{S^{d-1}\cap\frac{n}{r}\Omega}\Phi(K(\zeta))\,d\sigma(\zeta)\,dr.
}
Note that, in fact, the domain of integration for the outer integral is bounded by~$n\diam \Omega$, we will use this fact slightly later. Let
\eq{
\Psi(\rho) =\!\!\!\! \int\limits_{S^{d-1}\cap\rho^{-1}\Omega}\!\!\!\Phi(K(\zeta))\,d\sigma(\zeta),\qquad \rho \in (0,\infty).
}
This function satisfies the bounds
\alg{
\label{eq211}|\Psi(\rho)|\leq \M;\\
\label{eq212}\Psi(\rho) = \I + O(\rho^\beta),
}
for all~$\rho \in (0,\infty)$; the second formula follows from~\eqref{Holder}. Let~$\gamma \in (0,1)$ be a parameter. Then,~\eqref{eq29} equals
\eq{
\int\limits_{4}^{\infty}r^{-1}\Psi(r/n)\,dr = \int\limits_{4}^{n^\gamma}r^{-1}\Psi(r/n)\,dr + \int\limits_{n^\gamma}^{O(n)}r^{-1}\Psi(r/n)\,dr.
}
The absolute value of the second integral is bounded by~$\M(1-\gamma)\log n + O(1)$ by~\eqref{eq211}, whereas the first equals~$\I\gamma \log n + O(1+n^{(\gamma - 1)\beta})$ via~\eqref{eq212}.
If~$\I\ne 0$, we may choose~$\gamma$ such that the whole integral tends to infinity as~$n\to \infty$, which contradicts the initial inequality. We have proved necessity in Theorem~\ref{BoundedDomainTheorem}.

To prove necessity in Theorem~\ref{HalfSpaceTheorem}, we follow a similar route. Without loss of generality, we may assume~$\xi = (0,0,\ldots,1)$ and plug the function
\eq{
x\mapsto f_n(x_1,x_2,\ldots,x_{d-1},x_d - 1/n) - f(x_1,x_2,\ldots, x_{d-1},x_d - 1)
}
into~\eqref{HalfSpaceInequality}. We need to subtract the second term to obtain a function with zero mean. Applying the tricks we have used to pass from~\eqref{BoundedInequality} to the uniform boundedness of~\eqref{eq26} to remove the contribution of the second term to the left hand side of the inequality, we arrive at the integral
\eq{
\Big|\int\limits_{\genfrac{}{}{0pt}{-2}{|x|< n}{x_d > 0}}\Phi(K(x))\,dx\Big|.
}
The uniform boundedness (w.r.t~$n$) of this integral leads to~\eqref{CancellationNew} via the same change of variable.

The necessity of~\eqref{CancellationOld} in Theorem~\ref{HalfSpaceTheorem} is proved by considering the function
\eq{
x\mapsto f_n(x_1,x_2,\ldots,x_{d-1},x_d - 1) - f(x_1,x_2,\ldots, x_{d-1},x_d - 1).
}

\section{Preliminary estimates}\label{S3}
We split the kernel into parts:
\eq{
K_n(x) =
\begin{cases}
K(x),\qquad &|x|\in [2^{-n-1},2^{-n});\\
0,\qquad &\text{otherwise}.
\end{cases}
}
Then,~$K = \sum_n K_n$ and~$K_n(x) = 2^{(d-\alpha)n} K_0(2^n x)$. We will also use the notation
\eq{
K_{\leq n}(x) = \sum\limits_{k\leq n}K_k(x).
}
The condition~\eqref{CancellationOld} implies
\eq{\label{CancellationOld2}
\int\limits_{\R^d}\Phi(K_n(x))\,dx= \int\limits_{\R^d}\Phi(-K_n(x))\,dx = 0,
}
while~\eqref{CancellationNew} yields
\eq{\label{CancellationNew2}
\int\limits_{\scalprod{x}{\xi} \geq 0}\!\!\!\Phi(K_n(x))\,dx =\!\!\!  \int\limits_{\scalprod{x}{\xi} \geq 0}\!\!\!\Phi(-K_n(x))\,dx = 0.
}
The following lemma is a simpler variation of Lemma~$2.4$ in~\cite{Stolyarov2021bis}.
\begin{Le}\label{LowFrequency}
Let~$p\in [1,\infty)$. If~$\Omega$ is a bounded domain, then
\eq{
\int\limits_\Omega|K_{\leq 0}*f(x)|^p\,dx \lesssim \|f\|_{L_1(\R^d)}^p,\qquad f\in L_1(\R^d).
}
\end{Le}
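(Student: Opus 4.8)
The plan is to exploit that $K_{\leq 0}$ is a bounded, mildly decaying kernel, so that convolution with an $L_1$ function produces an $L_p$ function on any set of finite measure. First I would record the pointwise size of $K_{\leq 0}$. Since $K$ is positively $(\alpha-d)$-homogeneous and Lipschitz on the sphere, we have $|K(x)| \lesssim |x|^{\alpha-d}$; summing the dyadic pieces $K_k$ with $k \leq 0$, which are supported in $\{|x| \in [2^{-k-1},2^{-k})\} = \{|x|\sim 2^{-k}\}$ for $k\le 0$, i.e. in the region $|x| \gtrsim \tfrac12$, one gets
\eq{
|K_{\leq 0}(x)| \lesssim \sum_{k\leq 0} |x|^{\alpha-d}\mathbf{1}_{\{|x|\sim 2^{-k}\}} \lesssim \min\{1, |x|^{\alpha-d}\},
}
so in particular $K_{\leq 0} \in L_\infty(\R^d)$ and $K_{\leq 0}\in L_q(\R^d)$ for every $q$ with $q(d-\alpha) > d$, i.e. for every $q > p$ (and for $q=\infty$). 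Actually, for the argument it suffices to know $K_{\leq 0} \in L_\infty \cap L_p$ when $p>1$, and $K_{\leq 0}\in L_\infty$ together with local integrability when $p=1$; either way $K_{\leq 0}\in L_p(\R^d)$ since $p = d/(d-\alpha)$ forces $K_{\leq 0}$ to be borderline, but the $L_\infty$ bound kills the local singularity and leaves a tail $|x|^{\alpha-d}$ which is in $L_p(\{|x|>1\})$... wait, it is exactly not, so I would instead simply use $K_{\leq 0}\in L_\infty$ plus $K_{\leq 0}\in L_1(\{|x|\le R\})$ for the fixed $R$ below.

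Next, since $\Omega$ is bounded, fix $R$ with $\Omega \subset B_R(0)$, and let $R' = R + \diam\Omega + 1$ say. We only ever evaluate $K_{\leq 0}*f$ at points $x\in\Omega$. Split $f = f\mathbf{1}_{B_{R'}} + f\mathbf{1}_{\R^d\setminus B_{R'}}$ is not the right split; instead split the kernel in the convolution: for $x\in\Omega$,
\eq{
K_{\leq 0}*f(x) = \int_{\R^d} K_{\leq 0}(x-y) f(y)\,dy,
}
and estimate by Minkowski's integral inequality in $L_p(\Omega, dx)$:
\eq{
\big\| K_{\leq 0}*f \big\|_{L_p(\Omega)} \leq \int_{\R^d} |f(y)|\, \big\| K_{\leq 0}(\fdot - y)\big\|_{L_p(\Omega)}\,dy.
}
For each fixed $y$, the translated kernel restricted to the bounded set $\Omega$ satisfies $\| K_{\leq 0}(\fdot-y)\|_{L_p(\Omega)} \leq \| K_{\leq 0}\|_{L_\infty} |\Omega|^{1/p} \lesssim 1$ uniformly in $y$, using only the global $L_\infty$ bound on $K_{\leq 0}$ and the finiteness of $|\Omega|$. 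Plugging this in gives $\| K_{\leq 0}*f\|_{L_p(\Omega)} \lesssim \|f\|_{L_1(\R^d)}$, which is exactly the claimed inequality after raising to the $p$-th power.

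This argument is genuinely routine — the only thing to be careful about is that we do not need any decay of $K_{\leq 0}$ at infinity, because the boundedness of $\Omega$ already confines $x$ to a fixed ball, and a crude $L_\infty \times (\text{finite measure})$ estimate on the translated kernel, combined with Minkowski's inequality to move the $L_p(dx)$ norm inside the $y$-integral, does the job. If one wanted to avoid even the $L_\infty$ bound one could instead note $K_{\leq 0}\in L_r(\R^d)$ for $r$ slightly larger than $p$ together with Young's inequality $L_1 * L_r \subset L_r$ and then restrict to $\Omega$ using $\|\cdot\|_{L_p(\Omega)} \lesssim \|\cdot\|_{L_r(\Omega)}$ via H\"older on the finite-measure set $\Omega$; but the Minkowski approach above is cleaner and makes the role of the boundedness of $\Omega$ transparent. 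There is no real obstacle here; the lemma is stated precisely so that the low-frequency part of the kernel is harmless, and the content of the paper lies entirely in the high-frequency pieces $K_n$ with $n\geq 1$, handled via the cancellation conditions~\eqref{CancellationOld2} and~\eqref{CancellationNew2}.
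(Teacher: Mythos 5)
Your argument is correct and rests on exactly the same two facts as the paper's one-line proof: $K_{\leq 0}\in L_\infty$ (the pieces $K_k$, $k\le 0$, live where $|x|\gtrsim 1/2$, so the homogeneous bound $|K(x)|\lesssim|x|^{\alpha-d}$ gives boundedness) and $|\Omega|<\infty$; the paper simply writes $\|K_{\leq 0}*f\|_{L_\infty}\leq\|K_{\leq 0}\|_{L_\infty}\|f\|_{L_1}$ and integrates over the bounded set $\Omega$, so your Minkowski step is a harmless detour rather than a different route. Your mid-proof digression about $L_q$ membership of $K_{\leq 0}$ (correctly noting it just misses $L_p$) is not needed for the final argument and could be deleted.
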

\begin{proof}
The inequality follows from
\eq{
\|K_{\leq 0}*f\|_{L_\infty} \leq \|K_{\leq 0}\|_{L_\infty} \|f\|_{L_1}\lesssim \|f\|_{L_1},
}
and the boundedness of~$\Omega$.
\end{proof}
The function~$\Me_p\colon \R_+\times \R_+\to \R$ originated in~\cite{Stolyarov2021} in this context and played the pivotal role in~\cite{Stolyarov2021bis} (see formulas~$(2.17)$ and~$(5.4)$ of that paper):
\eq{
\Me_p(x,y) = 
\begin{cases}
\min(x^{p-1}y,xy^{p-1}),\qquad &p\in (1,2];\\
\frac12(x^{p-1}y + xy^{p-1}),\qquad &p\in (2,\infty),
\end{cases}
\qquad x \geq 0,y \geq 0.
}
The lemma below is completely similar to Lemma~$2.6$ in~\cite{Stolyarov2021bis}, we omit its proof.
\begin{Le}\label{Lemma32}
The inequality 
\eq{
\Big|\int\limits_{\Omega}\Phi(K_{\leq n+1}*f) - \Phi(K_{\leq n}*f)\Big| \lesssim \Big|\int\limits_{\Omega}\Phi(K_{n+1}*f)\Big| + \int\limits_{\Omega}\Me_p\Big(|K_{\leq n}*f|,|K_{n+1}*f|\Big)
}
holds true for any bounded compactly supported~$f$ and~$n\in\N$.
\end{Le}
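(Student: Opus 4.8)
The plan is to reduce the assertion to a pointwise inequality for the integrand together with the stability estimate~\eqref{L65} and then to integrate. Write~$a = K_{\leq n}*f$ and~$b = K_{n+1}*f$, so that~$K_{\leq n+1}*f = a+b$ and the quantity to be estimated is~$\big|\int_\Omega \big(\Phi(a+b)-\Phi(a)\big)\big|$. First I would split~$\Omega$ into the ``good'' set~$G = \set{x\in\Omega}{2|b(x)|\leq |a(x)|}$, where~$a$ dominates~$b$, and the ``bad'' set~$\Omega\setminus G$, where~$|a(x)| < 2|b(x)|$. This dichotomy is exactly the one suited to Lemma~$6.5$ from~\cite{Stolyarov2021bis}, quoted here as~\eqref{L65}.

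On the good set~$G$, apply~\eqref{L65} directly: $\big|\Phi(a+b)-\Phi(a)\big| \lesssim |a|^{p-1}|b|$ pointwise. Now I need to absorb~$|a|^{p-1}|b|$ into~$\Me_p(|a|,|b|)$. When~$p\in(1,2]$ this is immediate, since on~$G$ we have~$|b|\leq |a|$, hence~$|a|^{p-1}|b| = \min(|a|^{p-1}|b|, |a||b|^{p-1})\cdot(\text{something}\geq 1)$; more precisely~$\min(|a|^{p-1}|b|,|a||b|^{p-1}) = |a|^{p-1}|b|$ when~$|b|\leq|a|$ and~$p\leq 2$, so~$|a|^{p-1}|b| = \Me_p(|a|,|b|)$ exactly. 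When~$p>2$, we have~$|a|^{p-1}|b|\leq |a|^{p-1}|b| + |a||b|^{p-1} = 2\Me_p(|a|,|b|)$ trivially. Either way the good-set contribution is~$\lesssim \int_\Omega \Me_p(|a|,|b|)$, which is one of the two terms on the right-hand side, and the term~$\big|\int_\Omega \Phi(K_{n+1}*f)\big|$ is not even needed here.

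On the bad set~$\Omega\setminus G$, the perturbation~\eqref{L65} no longer applies, so instead I would bound~$\Phi(a+b)$ and~$\Phi(a)$ separately by homogeneity and the Lipschitz bound on the unit sphere, which give~$|\Phi(y)|\lesssim |y|^p$ for all~$y\in\R^\ell$. Thus~$\big|\Phi(a+b)-\Phi(a)\big| \lesssim |a+b|^p + |a|^p \lesssim |b|^p$ on~$\Omega\setminus G$, using~$|a| < 2|b|$. Then~$|b|^p = |b|^{p-1}|b| \leq \min(|a|^{p-1}|b|,\ldots)$ — wait, here one must be careful, since on the bad set~$|b|$ may exceed~$|a|$. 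The correct move is: on~$\Omega\setminus G$ we have both~$|a|^{p-1}|b| \leq (2|b|)^{p-1}|b| = 2^{p-1}|b|^p$ and also~$|b|^p$ itself is not controlled by~$\Me_p(|a|,|b|)$ in general, because~$\Me_p$ vanishes when~$a=0$. This is the point where the term~$\big|\int_\Omega \Phi(K_{n+1}*f)\big|$ enters. The clean way, following Lemma~$2.6$ of~\cite{Stolyarov2021bis}, is to not split at all on~$\Phi$ but to write
\begin{equation*}
\int_\Omega\Phi(a+b) - \int_\Omega\Phi(a) = \int_\Omega\Phi(b) + \int_\Omega\Big(\Phi(a+b)-\Phi(a)-\Phi(b)\Big),
\end{equation*}
so that the first term is exactly~$\int_\Omega\Phi(K_{n+1}*f)$, and the second integrand is pointwise~$\lesssim \Me_p(|a|,|b|)$ by a single elementary inequality valid for all~$a,b$: on the good set this follows from~\eqref{L65} plus~$|\Phi(b)|\lesssim|b|^p\lesssim|a|^{p-1}|b|$, and on the bad set from~$|\Phi(a+b)|+|\Phi(a)|+|\Phi(b)|\lesssim|b|^p$ together with the observation that there~$|a|^{p-1}|b| + |a||b|^{p-1}$ need not dominate~$|b|^p$; indeed it does not, so this elementary inequality fails as stated unless one also keeps the~$\Phi(b)$ term outside. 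I would therefore keep the decomposition above and only need
\begin{equation*}
\big|\Phi(a+b)-\Phi(a)-\Phi(b)\big| \lesssim \Me_p(|a|,|b|),\qquad a,b\in\R^\ell,
\end{equation*}
whose proof is: WLOG~$|b|\leq|a|$ by symmetry of the claim under~$a\leftrightarrow b$ together with~$\Me_p(x,y)=\Me_p(y,x)$; if further~$2|b|\leq|a|$ use~\eqref{L65} on~$\Phi(a+b)-\Phi(a)$ and~$|\Phi(b)|\lesssim|b|^p\leq|a|^{p-1}|b|$; if~$|b|\leq|a|\leq2|b|$, bound all three terms by~$\lesssim|a|^p\sim|a|^{p-1}|b|\lesssim\Me_p(|a|,|b|)$.

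The main obstacle is precisely this bookkeeping on the bad set: one must recognize that pulling~$\Phi(b)=\Phi(K_{n+1}*f)$ out as a separate, unestimated term is what makes the remaining difference controllable by~$\Me_p$ alone, and that without it the inequality is false. Once the pointwise bound on the three-term difference is in hand, integrating over~$\Omega$ and applying the triangle inequality for the integral yields the claim immediately, with no dependence on the geometry of~$\Omega$ beyond its being a measurable set — exactly as in Lemma~$2.6$ of~\cite{Stolyarov2021bis}, of which this is the announced ``completely similar'' variant.
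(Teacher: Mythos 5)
Your final argument is correct and is exactly the route the paper intends: Lemma~\ref{Lemma32} is stated with proof omitted as ``completely similar'' to Lemma~$2.6$ of~\cite{Stolyarov2021bis}, whose proof is precisely your decomposition $\int_\Omega\Phi(a+b)-\int_\Omega\Phi(a)=\int_\Omega\Phi(b)+\int_\Omega\big(\Phi(a+b)-\Phi(a)-\Phi(b)\big)$ with $b=K_{n+1}*f$ pulled out as the unestimated term, combined with the pointwise bound $|\Phi(a+b)-\Phi(a)-\Phi(b)|\lesssim\Me_p(|a|,|b|)$ proved via the symmetry reduction to $|b|\leq|a|$ and the two cases $2|b|\leq|a|$ (using~\eqref{L65}) and $|b|\leq|a|\leq 2|b|$. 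The detour in the middle of your write-up (trying to control the bad set without extracting $\Phi(b)$) is correctly abandoned, and the closing case analysis is sound as stated.
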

Since~$\Me_p$ is a non-negative function, the estimate
\eq{\label{MpEstimate}
\sum\limits_{n\in\Z}\int\limits_{\Omega}\Me_p\Big(|K_{\leq n}*f|,|K_{n+1}*f|\Big) \lesssim \|f\|_{L_1(\R^d)}^p
}
follows from Theorem~$2.3$ in~\cite{Stolyarov2021bis}. A companion estimate cannot be reduced to Theorem~$2.2$ in~\cite{Stolyarov2021bis} since now we need more cancellation conditions on the kernel~$K$.
\begin{Th}\label{BesovTheorem}
Assume~$\Omega$ is a bounded domain with~$C^{1,\beta}$-smooth boundary and~\eqref{CancellationNew} holds true. Then,
\eq{
\sum\limits_{n\in \Z}\Big|\int\limits_\Omega \Phi(K_n*f(x))\,dx\Big|\lesssim \|f\|_{L_1}^p
}
for any compactly supported bounded~$f$.
\end{Th}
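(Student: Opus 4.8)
The plan is to decompose the piece $K_n$ into a local part living near the boundary of $\Omega$ (where the domain $\Omega$ looks almost like a halfspace) and a far part, and to reduce the near-boundary contribution to a genuinely halfspace estimate, where the extra cancellation condition \eqref{CancellationNew} is exactly what is needed. Since $K_n(x) = 2^{(d-\alpha)n}K_0(2^n x)$, the convolution $K_n * f$ at a point $x\in\Omega$ only sees $f$ on the ball $B_{2^{-n}}(x)$, so $\Phi(K_n*f)$ is a sum of essentially independent contributions over a partition of $\Omega$ into cubes of side $\sim 2^{-n}$. Group these cubes according to whether they lie within distance $\sim 2^{-n}$ of $\partial\Omega$ or deeper inside. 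For the interior cubes, $\Omega$ contains the full ball $B_{c2^{-n}}(x)$, so on such cubes the integral $\int \Phi(K_n*f)$ essentially coincides with the corresponding integral over $\R^d$, and one can invoke the ``full space'' machinery — Theorem~\ref{OldTheorem} together with condition \eqref{CancellationOld2}, or more precisely the argument of Theorem~$2.2$ of~\cite{Stolyarov2021bis} which is exactly the summability statement $\sum_n |\int_{\R^d}\Phi(K_n*f)| \lesssim \|f\|_{L_1}^p$ — applied after splitting $f$ into the mass sitting in a $2^{-n}$-neighborhood of each cube.

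For the boundary cubes, the key point is that after rescaling by $2^n$ and using the $C^{1,\beta}$ hypothesis \eqref{Holder}, the rescaled domain $2^n\Omega$ near a boundary point is, up to an error of order $2^{-n\beta}$ in the relevant region, a halfspace $\{\scalprod{x}{\xi}>0\}$ with $\xi$ the inward normal at the nearest boundary point $z$. So the plan is: (i) replace $\int_{\text{bdry cube}}\Phi(K_n*f)$ by the same integral over the osculating halfspace $H_z = \{\scalprod{\fdot - z}{\xi_z}>0\}$, paying an error controlled by the H\"older modulus and an $\Me_p$-type term (using Lemma~$6.5$ of~\cite{Stolyarov2021bis}, i.e.~\eqref{L65}, to compare $\Phi$ on regions differing in measure by $O(2^{-n(d+\beta)})$); (ii) for the halfspace piece, use the halfspace cancellation \eqref{CancellationNew2} to run the corresponding ``Besov summability'' estimate on $H_z$. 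The errors from (i) should sum: the defect measure of each boundary cube relative to its osculating halfspace is $O(2^{-n(d+\beta)})$ while $|\Phi(K_n*f)|\lesssim 2^{-np}\cdot 2^{np\cdot?}$ — one bounds $|K_n*f|$ pointwise by $2^{(d-\alpha)n}\|f\|_{L_1}/|B|$ on each cube crudely, or better, one keeps only the $f$-mass near the cube — and the geometric gain $2^{-n\beta}$ against the $O(2^n)$ many boundary cubes of measure $2^{-nd}$ each produces a convergent series $\sum_n 2^{-n\beta}(\ldots)\lesssim\|f\|_{L_1}^p$ after a Hölder/convexity step organizing the local masses $\|f\|_{L_1(Q)}$. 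The $\Me_p$-error terms are absorbed by \eqref{MpEstimate}.

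The main obstacle I expect is making step (ii) — the halfspace summability estimate $\sum_n |\int_{H}\Phi(K_n*f)|\lesssim \|f\|_{L_1}^p$ under \eqref{CancellationNew} — rigorous, since the halfspace is unbounded and this is not literally Theorem~$2.2$ of~\cite{Stolyarov2021bis}; one wants a ``reflection'' trick turning the halfspace integral into a full-space integral for a modified kernel $\tilde K$ that still satisfies the right (full) cancellation, which is precisely where the stronger condition \eqref{CancellationNew} (as opposed to \eqref{CancellationOld}) is forced. Concretely, $\int_{\{\scalprod{x}{\xi}\geq0\}}\Phi(K_n(x))\,dx = 0$ says the ``half-averages'' of $\Phi\circ K$ vanish, which is the natural input to symmetrize the kernel across the hyperplane $\{\scalprod{x}{\xi}=0\}$ and land back in the setting already handled. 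A second, more technical obstacle is the bookkeeping that distributes the single quantity $\|f\|_{L_1(\R^d)}^p$ among all scales $n$ and all (boundary and interior) cubes without losing the geometric factor $2^{-n\beta}$; this is the kind of estimate that in~\cite{Stolyarov2021bis} was handled by the function $\Me_p$ and the master estimate \eqref{MpEstimate}, and I would expect to reuse that device here, together with the elementary inequality \eqref{L65} to control the boundary-layer discrepancies.
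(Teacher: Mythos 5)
Your overall skeleton (interior cubes handled by the full-space machinery, boundary cubes compared with an osculating halfspace, the $C^{1,\beta}$ condition producing a defect of volume $O(2^{-n(d+\beta)})$ and hence a gain $2^{-\beta n}$) matches the paper's strategy, but the proposal has two genuine gaps, and the second is exactly the new technical heart of the theorem. First, your step (ii) --- a ``Besov summability'' estimate on the osculating halfspace, to be obtained by reflecting/symmetrizing the kernel into a modified $\tilde K$ with full-space cancellation --- does not go through as stated: since $\Phi$ is nonlinear and is applied \emph{after} the convolution, $\int_{H}\Phi(K_n*f)$ cannot be rewritten as $\int_{\R^d}\Phi(\tilde K_n*f)$ for any reflected kernel (writing $\chi_H=\tfrac12(1+\mathrm{sgn}\scalprod{\fdot}{\xi})$ just reproduces the same signed integral with the same difficulty). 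The paper never needs such a halfspace black box: in Lemma~\ref{MainBoundaryLemma} one replaces the \emph{function} $f$ by the point mass $(\int f)\,\delta_y$ at a boundary point $y$, so the halfspace cancellation \eqref{CancellationNew2} is only ever applied to the explicit profile $\Phi(K_n(\fdot-y))$, where it literally kills the half-space integral, and the $C^{1,\beta}$ defect gives the $2^{-\beta n}\|f\|_{L_1(Q)}^p$ term.

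Second, and more seriously, the error of that replacement is $2^n\|f\|_{L_1(Q)}^{p-1}\inf_{c\in\partial\Omega}\int_Q|x-c|\,|f(x)|\,dx$, with the center constrained to lie on $\partial\Omega$, and it carries a factor $2^{+n}$, not $2^{-\beta n}$; summing it over all scales and all boundary cubes is the estimate \eqref{eq411}, which is \emph{not} absorbed by \eqref{MpEstimate}, by \eqref{L65}, or by the old paper's formula $(4.11)$ (the latter handles only the unconstrained infimum $\inf_{c\in Q}$, i.e.\ \eqref{eq410}). Your proposal waves at this with ``a H\"older/convexity step organizing the local masses,'' but this is precisely where new machinery is required: the paper introduces the boundary energy $\Eb_{Q,m}[f]$ built only from boundary subcubes, uses that the parent of a boundary cube is a boundary cube to get the telescoping bound \eqref{Telescopic}, proves the modified elementary inequality of Lemma~\ref{NewSimple} (with the extra term $Z$ accounting for mass in non-boundary children), and runs the chain-of-boundary-cubes/stopping-time construction of Lemma~\ref{SecondCore} to dominate each constrained term by decrements $\Eb_{Q,m}-\Eb_{Q,m+1}$ with geometrically decaying weights. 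Without an argument of this kind (or a substitute for it), the boundary contribution in your scheme is not summable, so the proposal as written does not yet prove Theorem~\ref{BesovTheorem}.
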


\section{Sufficiency: proof of Theorem~\ref{BesovTheorem}}\label{S4}
Let~$\{Q_{k,j}\}$ be the grid of dyadic cubes:
\eq{
Q_{k,j} = \prod_{i=1}^d\big[2^{-k} j_i, 2^{-k}(j_{i} + 1)\big),\qquad j=(j_1,j_2,\ldots,j_d)\in \Z^d, \ k\in \Z.
}
If~$Q$ is a cube,~$c(Q)$ denotes its center,~$\ell(Q)$ its sidelength, and~$\lambda Q$ is the cube with center~$c(Q)$ and sidelength~$\lambda \ell(Q)$,~$\lambda > 0$. 
\begin{Def}
We say that a cube~$Q$ is a boundary cube, provided~$(d+2)Q\cap\partial \Omega \ne \varnothing$. The collection of all boundary cubes is denoted by~$\B$.
\end{Def}
\begin{Le}\label{MainLemma}
Let~$Q$ be a dyadic cube of generation~$n$,~$Q\notin\B$. Let~$f$ be a summable function supported in~$Q$. Then,
\eq{\label{eq42}
\Big|\int\limits_\Omega \Phi(K_n*f(x))\,dx\Big| \lesssim 2^n \|f\|_{L_1(Q)}^{p-1}\inf\limits_{c\in Q}\int\limits_{Q}|x-c||f(x)|\,dx.
}
\end{Le}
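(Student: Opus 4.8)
The plan is to use the cancellation that comes from $Q$ being far from $\partial\Omega$. Since $Q \notin \B$, the inflated cube $(d+2)Q$ does not meet $\partial\Omega$, so $(d+2)Q$ lies entirely in $\Omega$ or entirely in the complement; in the second case the left-hand side of~\eqref{eq42} vanishes trivially once we observe that $K_n*f$ is supported in $Q + \supp K_n \subset (d+2)Q$ (the support of $K_n$ lives in the ball of radius $2^{-n}$, which is contained in a cube of sidelength $O(2^{-n}) = O(\ell(Q))$), so the whole integrand is supported inside $\Omega$. Hence we may assume $(d+2)Q \subset \Omega$, and then $\int_\Omega \Phi(K_n*f)$ equals $\int_{\R^d}\Phi(K_n*f)$. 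Now rescale: writing $f$ in terms of the unit-scale object and using $K_n(x) = 2^{(d-\alpha)n}K_0(2^n x)$ together with $p = d/(d-\alpha)$, the quantity $\int_{\R^d}\Phi(K_n*f)$ is affine-invariant in the sense that it reduces to the case $n=0$ with $f$ supported in a fixed unit cube and $\|f\|_{L_1}$ unchanged; this is the standard homogeneity computation (cf. the passage around~\eqref{HomKer}), so it suffices to prove~\eqref{eq42} for $n = 0$.

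Next I would exploit the cancellation condition~\eqref{CancellationOld2}, namely $\int_{\R^d}\Phi(\pm K_0) = 0$. Let $c \in Q$ be arbitrary and set $m = \int_Q f$. The idea is to compare $K_0 * f$ with the rescaled ``point-mass at $c$'' contribution $m\, K_0(\fdot - c)$. By~\eqref{CancellationOld2} and homogeneity, $\int_{\R^d}\Phi\big(m K_0(x-c)\big)\,dx = |m|^p\int_{\R^d}\Phi\big(\operatorname{sgn}(m)K_0(y)\big)\,dy = 0$, so
\eq{\label{eq:prop-diff}
\Big|\int_{\R^d}\Phi(K_0*f(x))\,dx\Big| = \Big|\int_{\R^d}\Big(\Phi(K_0*f(x)) - \Phi\big(mK_0(x-c)\big)\Big)\,dx\Big|.
}
Now split the domain into the region near $\supp f$ (say $3Q$, where the singularity of $K_0$ sits) and its complement. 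On the complement $\R^d\setminus 3Q$ the kernel $K_0$ is Lipschitz and bounded, and $|K_0*f(x) - mK_0(x-c)| = \big|\int_Q (K_0(x-y) - K_0(x-c))f(y)\,dy\big| \lesssim \inf_{c\in Q}\int_Q |y-c||f(y)|\,dy \cdot (1+|x-c|)^{\alpha-d-1}$ by the mean value theorem and the homogeneity of $\nabla K_0$; combined with $|K_0*f(x)|,|mK_0(x-c)| \lesssim |m|(1+|x-c|)^{\alpha-d}$ and the bound~\eqref{L65}, the integrand of~\eqref{eq:prop-diff} is $\lesssim |m|^{p-1}(1+|x-c|)^{(\alpha-d)(p-1)}\cdot \inf_c\int_Q|y-c||f(y)|\,dy\cdot(1+|x-c|)^{\alpha-d-1}$, and since $(\alpha-d)(p-1)+(\alpha-d-1) = -d-1$ this is integrable over $\R^d\setminus 3Q$ with the desired bound $|m|^{p-1}\inf_c\int_Q|y-c||f(y)|\,dy \leq \|f\|_{L_1(Q)}^{p-1}\inf_c\int_Q|y-c||f(y)|\,dy$. (Here I use $|m| \leq \|f\|_{L_1(Q)}$.)

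The main obstacle is the near region $3Q$, where $K_0$ is not bounded and the naive pointwise estimate fails. There I would not compare termwise but instead integrate $|\Phi(K_0*f)|$ and $|\Phi(mK_0(\fdot-c))|$ separately. For the second, $\int_{3Q}|\Phi(mK_0(x-c))|\,dx \lesssim |m|^p\int_{3Q}|x-c|^{(\alpha-d)p}\,dx$, and $(\alpha-d)p = -d$, so this integral is only logarithmically divergent — but in fact the integral is over a bounded region at a bounded distance, giving $\lesssim |m|^p \lesssim \|f\|_{L_1}^{p-1}\cdot|m|$, which is not quite the claimed bound since it lacks the factor $\inf_c\int_Q|y-c||f|$; so one must be more careful and instead keep the difference structure even near $Q$, using that $\Phi$ restricted to the sphere is Lipschitz together with an $L^1$-type estimate on $K_0*f - mK_0(\fdot-c)$. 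Concretely, I expect to use a bound of the form $\int_{3Q}|K_0*(f - m\delta_c)(x)|\cdot|x-c|^{\alpha-d}\,dx \lesssim \int_Q|y-c||f(y)|\,dy$ (a weighted potential estimate for a zero-mean density, provable by Fubini and the homogeneity of $K_0$) multiplied by $|m|^{p-1}\lesssim\|f\|_{L_1}^{p-1}$ coming from~\eqref{L65}; pushing this through for all $p\in(1,\infty)$, and in particular handling the endpoint integrability at the singularity, is the delicate point, and it is presumably where the argument of Lemma~$2.6$ or~$2.4$ in~\cite{Stolyarov2021bis} is adapted. Taking the infimum over $c\in Q$ at the end yields~\eqref{eq42}.
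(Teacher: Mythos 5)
Your first paragraph is exactly the paper's reduction: since $Q\notin\B$, either $(d+2)Q\cap\Omega=\varnothing$ (and the left-hand side of~\eqref{eq42} vanishes because $K_n*f$ is supported in $(d+2)Q$) or $(d+2)Q\subset\Omega$, in which case $\int_\Omega\Phi(K_n*f)=\int_{\R^d}\Phi(K_n*f)$. At this point the paper simply invokes Corollary~$3.2$ of~\cite{Stolyarov2021bis} (together with the remark that $\inf_{c\in Q}\int|x-c||f|=\inf_{c\in\R^d}\int|x-c||f|$ because $\supp f\subset Q$), since that corollary is precisely the whole-space version of~\eqref{eq42}. You instead set out to re-prove that whole-space bound, and this is where the proposal does not close: the ``near region'' $3Q$, which you yourself identify as the delicate part, is left unproved — you end by saying it is ``presumably where the argument of Lemma~$2.6$ or~$2.4$ in~\cite{Stolyarov2021bis} is adapted.'' That is the entire content of the estimate, so as written the proof has a genuine gap (unless you are allowed to quote Corollary~$3.2$ of~\cite{Stolyarov2021bis}, in which case everything after your first paragraph is unnecessary).

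Two concrete problems in the attempted re-proof. First, you treat $K_0$ as if it were the full homogeneous kernel: outside $3Q$ there is no far-field decay to exploit because $K_0$ is supported in the annulus $\{1/2\le|x|<1\}$, so both $K_0*f$ and $mK_0(\cdot-c)$ vanish identically off a bounded neighborhood of $Q$; moreover $K_0$ is not homogeneous and has jump discontinuities across the two spheres, so the pointwise mean-value bound $|K_0(x-y)-K_0(x-c)|\lesssim|y-c|$ fails near those spheres (it is only true after integration in $x$). Thus the ``far region'' analysis is vacuous and all the difficulty sits exactly in the region you did not handle. Second, inside $3Q$ the use of~\eqref{L65} with $a=mK_0(x-c)$ breaks down wherever $|b|>|a|/2$ — in particular on $\{|x-c|<1/2\}$ and $\{|x-c|\ge 1\}$, where $a=0$ — and your proposed weight $|x-c|^{\alpha-d}$ does not match $|a|^{p-1}\sim|m|^{p-1}|x-c|^{(\alpha-d)(p-1)}$ except when $p=2$. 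The cleanest repair, if you want a self-contained argument rather than a citation, is to avoid~\eqref{L65} altogether: use $|\Phi(A)-\Phi(B)|\lesssim(|A|+|B|)^{p-1}|A-B|$ (valid for $p$-homogeneous $\Phi$ Lipschitz on the sphere), together with $\|K_0*f\|_{L_\infty}+\|mK_0(\cdot-c)\|_{L_\infty}\lesssim\|f\|_{L_1}$ and the $L_1$-bound $\|K_0*f-mK_0(\cdot-c)\|_{L_1}\lesssim\int_Q|y-c||f(y)|\,dy$, the latter following from $\int|K_0(x-y)-K_0(x-c)|\,dx\lesssim|y-c|$ (Lipschitz inside the annulus plus the fact that shifting the center sweeps a set of measure $O(|y-c|)$ across the jumps). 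This yields the whole-space estimate for every $p\in(1,\infty)$ and completes your sketch.
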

\begin{proof}
The function~$K_n*f$ is supported in~$(d+2)Q$. Since~$Q\notin\B$, either~$(d+2)Q\cap \Omega = \varnothing$, or~$(d+2)Q\subset \Omega$. In the first case, the integral on the left hand side of~\eqref{eq42} vanishes. In the other case,
\eq{
\int\limits_\Omega \Phi(K_n*f(x))\,dx = \int\limits_{\R^d} \Phi(K_n*f(x))\,dx.
}
The desired inequality follows from Corollary~$3.2$ in~\cite{Stolyarov2021bis} and the observation that
\eq{
\inf\limits_{c\in Q}\int\limits_{\R^d}|x-c||f(x)|\,dx = \inf\limits_{c\in \R^d}\int\limits_{\R^d}|x-c||f(x)|\,dx
}
since~$\supp f \subset Q$.
\end{proof}
\begin{Le}\label{MainBoundaryLemma}
Let~$Q$ be a boundary dyadic cube of generation~$n \geq 0$. Let~$f$ be supported in~$Q$. Then,
\eq{
\Big|\int\limits_\Omega \Phi(K_n*f(x))\,dx\Big| \lesssim 2^n \|f\|_{L_1(Q)}^{p-1}\inf\limits_{c\in \partial \Omega}\int\limits_{Q}|x-c||f(x)|\,dx + 2^{-\beta n}\|f\|_{L_1(Q)}^p.
}
\end{Le}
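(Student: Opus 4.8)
The plan is to compare $\Omega$, near the boundary cube $Q$, with its tangent halfspace at a boundary point, and to substitute the finer cancellation~\eqref{CancellationNew2} into a halfspace version of the argument already used for Lemma~\ref{MainLemma}. Since $Q$ is a boundary cube, fix a point $z\in\partial\Omega\cap(d+2)Q$, let $\nu=\nu(z)$ be the inward unit normal, and put $\Omega_z=\set{x\in\R^d}{\scalprod{x-z}{\nu}\ge0}$, with tangent hyperplane $H=\partial\Omega_z$. Recall from the proof of Lemma~\ref{MainLemma} that $\supp(K_n*f)\subset(d+2)Q$; moreover $\|K_n\|_{L_\infty}\lesssim 2^{(d-\alpha)n}$, so $\|K_n*f\|_{L_\infty}\lesssim 2^{(d-\alpha)n}\|f\|_{L_1(Q)}$ and hence, using $p(d-\alpha)=d$ and $|\Phi(v)|\lesssim|v|^p$ on $\R^\ell$, $|\Phi(K_n*f(x))|\lesssim 2^{dn}\|f\|_{L_1(Q)}^p$ pointwise. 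This crude estimate already gives $\bigl|\int_\Omega\Phi(K_n*f)\bigr|\lesssim|(d+2)Q|\cdot 2^{dn}\|f\|_{L_1(Q)}^p\lesssim\|f\|_{L_1(Q)}^p$, which, since $2^{-\beta n}$ is bounded below on any fixed range, settles the case $0\le n\le n_0(\Omega)$; I would choose $n_0$, using compactness of $\partial\Omega$, so large that for $n\ge n_0$ the chart of $\partial\Omega$ around $z$ — in coordinates where $\partial\Omega=\set{(y,h(y))}{y\in\R^{d-1}}$ with $h(0)=0$, $\nabla h(0)=0$, and $H=\set{(y,0)}{y\in\R^{d-1}}$ — obeys~\eqref{Holder} on the ball $|y|\lesssim 2^{-n}$, and from now on assume $n\ge n_0$.

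First I would split $\int_\Omega\Phi(K_n*f)=\int_{\Omega_z}\Phi(K_n*f)+\bigl(\int_{\Omega\setminus\Omega_z}-\int_{\Omega_z\setminus\Omega}\bigr)\Phi(K_n*f)$, the remainder bracket being concentrated on $(\Omega\triangle\Omega_z)\cap(d+2)Q$ because $K_n*f$ vanishes off $(d+2)Q$ and $\Phi(0)=0$. By~\eqref{Holder} this symmetric difference lies in a slab over a base of diameter $\lesssim2^{-n}$ and of height $\lesssim2^{-(1+\beta)n}$, so it has measure $\lesssim2^{-(d-1)n}\cdot2^{-(1+\beta)n}=2^{-(d+\beta)n}$; combined with the pointwise bound above, the remainder bracket is $\lesssim2^{-(d+\beta)n}\cdot 2^{dn}\|f\|_{L_1(Q)}^p=2^{-\beta n}\|f\|_{L_1(Q)}^p$, i.e. the second summand in the statement.

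For the halfspace term I would mimic Lemma~\ref{MainLemma}. Put $m=\int_{\R^d}f$ and let $c$ be any point of $H$. Then $K_n*(m\delta_c)(x)=mK_n(x-c)$, and the substitution $y=x-c$ together with $\scalprod{c-z}{\nu}=0$ identifies $\int_{\Omega_z}\Phi(mK_n(x-c))\,dx$ with $|m|^p\int_{\scalprod{y}{\nu}\ge0}\Phi(\pm K_n(y))\,dy$, which vanishes by~\eqref{CancellationNew2} taken with $\xi=\nu$. Hence $\int_{\Omega_z}\Phi(K_n*f)=\int_{\Omega_z}\bigl(\Phi(K_n*f)-\Phi(K_n*(m\delta_c))\bigr)$, and now the proof of Corollary~$3.2$ in~\cite{Stolyarov2021bis} carries over with only notational changes — $\R^d$ replaced by $\Omega_z$ and the full cancellation~\eqref{CancellationOld2} by the halfspace cancellation~\eqref{CancellationNew2} — splitting $\Omega_z$ into the set where $2|K_n*(f-m\delta_c)(x)|\le|mK_n(x-c)|$, on which~\eqref{L65} applies, and its complement. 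This produces $\bigl|\int_{\Omega_z}\Phi(K_n*f)\bigr|\lesssim 2^n\|f\|_{L_1(Q)}^{p-1}\inf_{c\in H}\int_Q|x-c|\,|f(x)|\,dx$.

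Finally I would trade $\inf_{c\in H}$ for $\inf_{c\in\partial\Omega}$. Pick $c^*\in\partial\Omega$ nearly optimal for the latter infimum; since $z\in\partial\Omega$ is admissible and $z\in(d+2)Q$, one has $\int_Q|x-c^*|\,|f(x)|\,dx\le\int_Q|x-z|\,|f(x)|\,dx\lesssim 2^{-n}\|f\|_{L_1(Q)}$, which forces $\mathrm{dist}(c^*,Q)\lesssim 2^{-n}$ (as $|x-c^*|\ge\mathrm{dist}(c^*,Q)$ on $Q$), so $c^*$ lies in the chart around $z$ and, writing $c^*=(y^*,h(y^*))$, one gets $\mathrm{dist}(c^*,H)=|h(y^*)|\lesssim 2^{-(1+\beta)n}$ by~\eqref{Holder}. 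With $c'=(y^*,0)\in H$ the orthogonal projection of $c^*$ it follows that $\inf_{c\in H}\int_Q|x-c|\,|f(x)|\,dx\le\int_Q|x-c'|\,|f(x)|\,dx\le\inf_{c\in\partial\Omega}\int_Q|x-c|\,|f(x)|\,dx+C2^{-(1+\beta)n}\|f\|_{L_1(Q)}$, and multiplying by $2^n\|f\|_{L_1(Q)}^{p-1}$ turns the error into $C2^{-\beta n}\|f\|_{L_1(Q)}^p$. Adding the three contributions — the small-$n$ case, the symmetric-difference remainder, and the halfspace term — gives the lemma. The only genuinely new work beyond Lemma~\ref{MainLemma} is the halfspace analogue of Corollary~$3.2$, which is exactly where the strictly stronger hypothesis~\eqref{CancellationNew} (through~\eqref{CancellationNew2}) is indispensable, together with the bookkeeping that both flattening errors — a $2^{-(d+\beta)n}$ loss in volume and a $2^{-(1+\beta)n}$ loss per unit of first moment — are precisely absorbed by the permitted remainder $2^{-\beta n}\|f\|_{L_1(Q)}^p$; this is what the $C^{1,\beta}$ regularity of $\partial\Omega$ buys.
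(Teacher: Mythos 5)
Your proposal is correct and uses essentially the same ingredients as the paper's proof: comparison of $K_n*f$ with a point mass placed at a boundary/tangent-plane point (the Lemma~3.1/Corollary~3.2 mechanism from~\cite{Stolyarov2021bis}), the halfspace cancellation~\eqref{CancellationNew2} with $\xi$ the inward normal, and the $C^{1,\beta}$ bound~\eqref{Holder} giving a symmetric-difference region of measure $O(2^{-(d+\beta)n})$ against the pointwise bound $2^{dn}\|f\|_{L_1(Q)}^p$. The only difference is the order of the two comparisons — the paper subtracts the delta over $\Omega$ first and flattens only the single integral $\int_\Omega\Phi(K_n(x-y))\,dx$, while you flatten the domain first and then must trade $\inf_{c\in H}$ for $\inf_{c\in\partial\Omega}$ — a harmless extra bookkeeping step whose error is again absorbed in $2^{-\beta n}\|f\|_{L_1(Q)}^p$.
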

\begin{proof}
Let~$y\in \partial\Omega$ be an arbitrary point. Then, 
\mlt{
\Big|\int\limits_{\Omega}\Phi(K_n*f(x))\,dx\Big| \\ 
\leq \bigg|\int\limits_{\Omega}\Phi(K_n*f(x))\,dx - \int\limits_{\Omega}\Phi\Big(K_n(x-y)\cdot \int f\Big)\,dx\bigg| + \bigg|\int\limits_{\Omega}\Phi\Big(K_n(x-y)\cdot \int f\Big)\,dx\bigg|.
} 
For specific~$y\in \partial \Omega$, the first summand is bounded by~$2^n \|f\|_{L_1(Q)}^{p-1}\inf_{c\in \partial \Omega}\int_{Q}|x-c||f(x)|\,dx$ similar to the proof of Lemma~$3.1$ in~\cite{Stolyarov2021bis} and we only need to show that
\eq{
\sup\limits_{y\in \partial\Omega} \Big|\int\limits_{\Omega}\Phi(K_n(x-y))\,dx\Big| \lesssim 2^{-\beta n}.
}
Fix~$y\in \partial\Omega$ and introduce~$\xi = \norm_y$ --- the inward pointing normal to~$\partial \Omega$ at the point~$y$. Then,
\eq{
\Big|\int\limits_{\Omega}\Phi(K_n(x-y))\,dx\Big| \leq  \Big|\!\!\!\int\limits_{\genfrac{}{}{0pt}{-2}{x\colon\! \scalprod{x-y}{\xi}}{ \geq 0}}\!\!\!\!\!\!\Phi(K_n(x-y))\,dx\Big|+\!\!\!\!\!\! \int\limits_{\genfrac{}{}{0pt}{-2}{x\notin\Omega,}{\scalprod{x-y}{\xi} \geq 0}}\!\!\!|K_n(x-y)|^p\,dx+\!\!\!\!\!\! \int\limits_{\genfrac{}{}{0pt}{-2}{x\in\Omega,}{\scalprod{x-y}{\xi} \leq 0}}\!\!\!|K_n(x-y)|^p\,dx.
}
The first summand is zero by~\eqref{CancellationNew2}. To bound the second and the third summands, we use~\eqref{Holder} and note that the volume of the domains of integration is~$O(2^{-(d+\beta)n})$, while the integrands do not exceed~$2^{p(d-\alpha)n}$ in absolute value. Therefore, the second and the third terms are~$O(2^{-\beta n})$ as desired.
\end{proof}
We formulate an analog of Theorem~$3.1$ in~\cite{Stolyarov2021bis}.
\begin{Th}\label{Th41}
For any~$n \geq 0$, the inequality
\mlt{\label{LongFormula}
\Big|\int\limits_\Omega \Phi(K_{n+1}*f)\Big|\lesssim 2^n\!\!\!\sum\limits_{\genfrac{}{}{0pt}{-2}{j\in\Z^d}{3Q_{n,j}\notin\B}}\!\!\!\|f\|_{L_1(3Q_{n,j})}^{p-1}\inf\limits_{c_{n,j}}\!\!\!\int\limits_{3Q_{n,j}}\!\!\!|x-c_{n,j}||f(x)|\,dx\\ + 2^n\!\!\!\sum\limits_{\genfrac{}{}{0pt}{-2}{j\in\Z^d}{3Q_{n,j}\in\B}}\!\!\!\|f\|_{L_1(3Q_{n,j})}^{p-1}\inf\limits_{c_{n,j}\in\partial\Omega}\!\!\!\int\limits_{3Q_{n,j}}\!\!\!|x-c_{n,j}||f(x)|\,dx + 2^{-\beta n}\!\!\!\sum\limits_{\genfrac{}{}{0pt}{-2}{j\in\Z^d}{3Q_{n,j}\in\B}}\!\!\!\|f\|_{L_1(3Q_{n,j})}^p
}
holds true.
\end{Th}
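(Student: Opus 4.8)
The plan is to decompose $f=\sum_j f\mathbf{1}_{Q_{n,j}}$ over the dyadic cubes of generation $n$ and then exploit the almost-orthogonality of the pieces $K_{n+1}*(f\mathbf{1}_{Q_{n,j}})$. The key geometric fact is that $K_{n+1}$ is supported in the ball of radius $2^{-n-1}$, hence $K_{n+1}*(f\mathbf{1}_{Q_{n,j}})$ is supported in $2Q_{n,j}$ (more conservatively in $3Q_{n,j}$, which is why that dilate appears in the statement); consequently the supports of these convolutions have bounded overlap, each point of $\R^d$ lying in at most a dimensional constant number of them. This lets us pass from the integral of $\Phi$ of the full sum to a sum of local contributions, modulo errors controlled by the function $\Me_p$ — this is exactly the mechanism behind Theorem~$3.1$ in~\cite{Stolyarov2021bis}, and I would follow that template. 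The one genuinely new ingredient is that some of the cubes $Q_{n,j}$ (those for which $3Q_{n,j}$ meets $\partial\Omega$, i.e. $3Q_{n,j}\in\B$) interact with the boundary, and for those we must use Lemma~\ref{MainBoundaryLemma} in place of Lemma~\ref{MainLemma}.

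Concretely, first I would fix $n\geq 0$ and write $f_j=f\mathbf{1}_{Q_{n,j}}$, so $K_{n+1}*f=\sum_j K_{n+1}*f_j$ with each summand supported in $3Q_{n,j}$. Using the bounded overlap of the supports together with the quasi-subadditivity estimate for $\Phi$ along a sum (the analogue of the passage used to prove Theorem~$3.1$ in~\cite{Stolyarov2021bis}, built on Lemma~$6.5$ of that paper, here quoted as~\eqref{L65}), I would bound $\bigl|\int_\Omega \Phi(K_{n+1}*f)\bigr|$ by $\sum_j \bigl|\int_\Omega \Phi(K_{n+1}*f_j)\bigr|$ plus cross terms. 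The cross terms, by the very same computation as in~\cite{Stolyarov2021bis}, are absorbed into the $\Me_p$-type quantities — but note those do not appear in~\eqref{LongFormula}; so in fact the cleaner route is to observe that, because of bounded overlap, $\int_\Omega\Phi(K_{n+1}*f)$ is literally equal to $\sum_j\int_\Omega\Phi(K_{n+1}*f_j)$ up to the overlap multiplicity — more precisely one groups the cubes into $O(1)$ colour classes within each of which the dilates $3Q_{n,j}$ are pairwise disjoint, and on each colour class the integral of $\Phi$ of the sum is the sum of the integrals since the supports are disjoint. Summing over the $O(1)$ colour classes gives $\bigl|\int_\Omega\Phi(K_{n+1}*f)\bigr|\lesssim \sum_j\bigl|\int_\Omega\Phi(K_{n+1}*f_j)\bigr|$.

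Next I would estimate each term $\bigl|\int_\Omega\Phi(K_{n+1}*f_j)\bigr|$ separately. There is a mild bookkeeping point: $f_j$ is supported in the generation-$n$ cube $Q_{n,j}$ but we are convolving with $K_{n+1}$, which has the scale of generation $n+1$; one checks that $K_{n+1}*f_j$ is supported in $3Q_{n,j}$ and that $3Q_{n,j}$ is itself a union of boundedly many dyadic cubes of generation $n+1$, so Lemmas~\ref{MainLemma} and~\ref{MainBoundaryLemma} (which are stated for $f$ supported in a single dyadic cube and convolution with $K_n$ for a cube of generation $n$) apply after a harmless rescaling, with $2^{n+1}\simeq 2^n$ absorbed into the implied constant. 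If $3Q_{n,j}\notin\B$ then $3Q_{n,j}$ is at distance $\gtrsim 2^{-n}$ from $\partial\Omega$, so the support $3Q_{n,j}$ of $K_{n+1}*f_j$ either misses $\Omega$ or lies inside $\Omega$, and Lemma~\ref{MainLemma} gives the first sum on the right of~\eqref{LongFormula}. If $3Q_{n,j}\in\B$, Lemma~\ref{MainBoundaryLemma} gives the second and third sums. Summing over $j$ yields~\eqref{LongFormula}.

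The main obstacle I anticipate is the careful handling of the mismatch between the generation of the partition ($n$) and the generation of the kernel ($n+1$), and making the bounded-overlap/colouring argument rigorous with the correct dilates so that exactly $3Q_{n,j}$ — and not $2Q_{n,j}$ or $(d+2)Q_{n,j}$ — appears; one must check that $\supp(K_{n+1}*f_j)\subset 3Q_{n,j}$ and that the family $\{3Q_{n,j}\}_j$ has overlap bounded by a dimensional constant, and that "$3Q_{n,j}\in\B$'' (as used in~\eqref{LongFormula}) is the right dichotomy, i.e. that when $3Q_{n,j}\notin\B$ the relevant generation-$(n+1)$ subcubes are themselves non-boundary in the sense of the Definition preceding Lemma~\ref{MainLemma}. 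All of this is a routine adaptation of the argument for Theorem~$3.1$ in~\cite{Stolyarov2021bis}; the boundary terms are the only conceptually new feature and they are already isolated in Lemma~\ref{MainBoundaryLemma}.
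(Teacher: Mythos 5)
Your endgame --- feeding the localized pieces into Lemma~\ref{MainLemma} when $3Q_{n,j}\notin\B$ and into Lemma~\ref{MainBoundaryLemma} when $3Q_{n,j}\in\B$, with $2^{n+1}\simeq 2^n$ absorbed into the constant --- is exactly how the paper finishes, but the localization step that precedes it has a genuine gap. Your ``cleaner route'' claims that, after grouping the cubes into $O(1)$ colour classes with pairwise disjoint dilates $3Q_{n,j}$, one gets $\bigl|\int_\Omega\Phi(K_{n+1}*f)\bigr|\lesssim\sum_j\bigl|\int_\Omega\Phi(K_{n+1}*f_j)\bigr|$. Disjointness of supports \emph{within} one colour class does let you split $\Phi$ of the sum over that class, but $K_{n+1}*f$ is the sum of the $O(1)$ class fields, whose supports \emph{do} overlap, and $\Phi$ is a signed, nonlinear function: there is no inequality of the form $|\int\Phi(g_1+\dots+g_m)|\lesssim\sum_i|\int\Phi(g_i)|$ for overlapping $g_i$. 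Each $\int\Phi(g_i)$ can vanish by cancellation while the interaction term makes the left-hand side large; controlling precisely this interaction is the content of the theorem, so the step cannot be waved through by bounded overlap. Your fallback --- absorbing cross terms into $\Me_p$-type quantities --- does not prove the statement either: those quantities do not appear in~\eqref{LongFormula} (in the overall scheme they are handled separately, after summation in $n$, by~\eqref{MpEstimate}), and the estimate~\eqref{L65} needs one argument to dominate the other, which is not available pointwise for neighbouring cubes at a single scale.

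The paper avoids all error terms by an exact pointwise telescoping identity, applied one coordinate at a time. Split $\R^d$ into slabs $\Pi_j=\{x\colon x_1\in[2^{-n}j,2^{-n}(j+1))\}$ and set $f_j=f\chi_{\Pi_j}$; since $K_{n+1}$ is supported in a ball of radius $2^{-n-1}$, at every point at most two \emph{consecutive} convolutions $K_{n+1}*f_j$ are nonzero, whence
\eq{
\Phi(K_{n+1}*f)=\sum_{j\in\Z}\Phi\bigl(K_{n+1}*(f_j+f_{j+1})\bigr)-\sum_{j\in\Z}\Phi\bigl(K_{n+1}*f_j\bigr)
}
holds identically, with no remainder. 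Iterating this in the remaining $d-1$ coordinates bounds $\bigl|\int_\Omega\Phi(K_{n+1}*f)\bigr|$ by a sum of terms $\bigl|\int_\Omega\Phi(K_{n+1}*\tilde f)\bigr|$, where each $\tilde f$ is $f$ times an indicator of a box of side at most $2^{1-n}$, contained in some $3Q_{n,j}$ and with multiplicity at most $2^d$. Note that the ``pair'' pieces $f_j+f_{j+1}$ are essential and are absent from your single-cube decomposition $f=\sum_j f\chi_{Q_{n,j}}$; if you want to salvage your write-up, replace the colouring argument by this identity (the rest of your argument then goes through as you describe).
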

Theorem~\ref{Th41} is derived from Lemmas~\ref{MainLemma} and~\ref{MainBoundaryLemma} in the same way as Theorem~$3.1$ in~\cite{Stolyarov2021bis} is deduced from Lemma~$3.1$ of that paper. We will provide a slight simplification of the argument.
\begin{proof}[Proof of Theorem~\ref{Th41}]
Recall that~$K_{n+1}$ is supported in the ball of radius~$2^{-n-1}$. We split the whole space into the strips
\eq{
\Pi_j = \Set{x\in\R^d}{x_1 \in [2^{-n}j,2^{-n}(j+1))},\qquad j\in \Z.
}
Let~$f_j = f\chi_{\Pi_j}$. We observe a useful pointwise identity
\eq{
\Phi(K_{n+1}*f) = \sum\limits_{j\in\Z}\Phi(K_{n+1}*(f_j+f_{j+1})) - \sum\limits_{j\in\Z}\Phi(K_{n+1}*f_j),
}
which leads to the bound
\eq{
\Big|\int\limits_{\Omega} \Phi(K_{n+1}*f)\Big| \leq \sum\limits_{j\in\Z}\Big|\int\limits_\Omega \Phi(K_{n+1}*(f_j+f_{j+1}))\Big| + \sum\limits_{j\in\Z}\Big|\int\limits_\Omega \Phi(K_{n+1}*f_j)\Big|.
}
Performing a similar splitting procedure for the other~$(d-1)$ coordinates, we bound the integral on the left hand side of~\eqref{LongFormula} with 
\eq{
\sum\Big|\int\limits_\Omega \Phi(K_{n+1}*\tilde{f})\Big|,
}
where each function~$\tilde{f}$ is supported in a cube~$3 Q_{n,j}$ for some~$j$, and each cube~$3Q_{n,j}$ is chosen by at most~$2^d$ functions~$\tilde{f}$. We also note that each~$\tilde{f}$ is nothing but~$f$ multiplied by an indicator. Thus, the desired bound indeed follows from Lemmas~\ref{MainLemma} and~\ref{MainBoundaryLemma}.
\end{proof}

Thus, the proof of Theorem~\ref{BesovTheorem} is naturally split into three estimates:
\alg{
\label{eq410}\sum\limits_{n \geq 0}2^n\!\!\!\sum\limits_{\genfrac{}{}{0pt}{-2}{j\in\Z^d}{3Q_{n,j}\notin\B}}\!\!\!\|f\|_{L_1(3Q_{n,j})}^{p-1}\inf\limits_{c_{n,j}}\!\!\!\int\limits_{3Q_{n,j}}\!\!\!|x-c_{n,j}||f(x)|\,dx \lesssim \|f\|_{L_1(\R^d)}^p;\\
\label{eq411}\sum\limits_{n\geq 0}2^n\!\!\!\sum\limits_{\genfrac{}{}{0pt}{-2}{j\in\Z^d}{3Q_{n,j}\in\B}}\!\!\!\|f\|_{L_1(3Q_{n,j})}^{p-1}\inf\limits_{c_{n,j}\in\partial\Omega}\!\!\!\int\limits_{3Q_{n,j}}\!\!\!|x-c_{n,j}||f(x)|\,dx \lesssim \|f\|_{L_1(\R^d)}^p;\\
\label{eq412}\sum\limits_{n\geq 0}2^{-\beta n}\!\!\!\sum\limits_{\genfrac{}{}{0pt}{-2}{j\in\Z^d}{3Q_{n,j}\in\B}}\!\!\!\|f\|_{L_1(3Q_{n,j})}^p \lesssim \|f\|_{L_1(\R^d)}^p.
}
The estimate~\eqref{eq410} was proved in~\cite{Stolyarov2021bis}, see formula~$(4.11)$ of that paper. The proof of~\eqref{eq412} is relatively simple:
\eq{
\sum\limits_{n\geq 0}2^{-\beta n}\!\!\!\sum\limits_{\genfrac{}{}{0pt}{-2}{j\in\Z^d}{3Q_{n,j}\in\B}}\!\!\!\|f\|_{L_1(3Q_{n,j})}^p \lesssim \sum\limits_{n\geq 0}2^{-\beta n}\|f\|_{L_1(\R^d)}^p \lesssim \|f\|_{L_1(\R^d)}^p.
}
It remains to justify~\eqref{eq411}. 
Let~$Q$ be a cube. By~$\D_m(Q)$ we mean the collection of all dyadic subcubes of generation~$m$ of the cube~$Q$ ($Q$ itself is of generation~$0$). Consider the quantity
\eq{
\Eb_{Q,m}[f] = \sum\limits_{\genfrac{}{}{0pt}{-2}{Q'\in \B}{Q'\in \D_m(Q)}}\Big(\int\limits_{Q'}|f(x)|\,dx\Big)^p,
}
which is the boundary modification of the quantity
\eq{
\E_{Q,m}[f] = \sum\limits_{Q'\in \D_m(Q)}\Big(\int\limits_{Q'}|f(x)|\,dx\Big)^p,
}
which played an important role in~\cite{Stolyarov2021bis}. Note that~$\Eb_{Q',1}[f] \leq \Eb_{Q',0}[f]$ for any cube~$Q'$; here we use that a parent of a boundary cube is also a boundary cube. In particular,~$\Eb_{Q,m+1}[f]\leq \Eb_{Q,m}[f]$ for any~$m \geq 0$. Therefore,
\eq{\label{Telescopic}
\sum\limits_{m \geq 0}\big(\Eb_{Q,m}[f] - \Eb_{Q,m+1}[f]\big) \leq \|f\|_{L_1(Q)}^p,
}
and all the summands in this sum are non-negative.
\begin{Le}\label{SecondCore}
Fix some~$\eps \in (0,1/2)$. Let~$Q$ be a boundary cube. Then,
\eq{
\frac{\|f\|_{L_1(Q)}^{p-1}}{\ell(Q)}\inf\limits_{c\in\partial\Omega}\int\limits_{Q}|x-c||f(x)|\,dx \lesssim \sum\limits_{m \geq 0}(1-\eps)^{m}\Big(\Eb_{Q,m}[f] - \Eb_{Q,m+1}[f]\Big).
}
\end{Le}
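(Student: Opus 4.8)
The plan is to argue exactly as in the analogous estimate for $\E_{Q,m}$ in~\cite{Stolyarov2021bis} (the non-boundary version), keeping track of the point that for boundary cubes we are allowed to choose the center $c$ on $\partial\Omega$, which is what makes the telescoping of $\Eb$ legitimate. First I would fix a point $c_0 \in \partial\Omega \cap (d+2)Q$ (nonempty since $Q\in\B$) and estimate $\inf_{c\in\partial\Omega}\int_Q |x-c||f(x)|\,dx \leq \int_Q|x-c_0||f(x)|\,dx$. Then I would decompose $Q$ into its dyadic subcubes and split the integral $\int_Q |x-c_0||f(x)|\,dx$ according to generations: writing $Q = \bigsqcup_{Q'\in\D_m(Q)} Q'$, on a subcube $Q'$ of generation $m$ that is \emph{far} from $c_0$ the factor $|x-c_0|$ is comparable to $\mathrm{dist}(c_0, Q')$, while on the (boundedly many, since $\partial\Omega$ is $C^{1,\beta}$ hence in particular a Lipschitz graph locally) subcubes $Q'$ of generation $m$ near $c_0$ one has $|x-c_0|\lesssim 2^{-m}\ell(Q)$. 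The key geometric input is that a subcube $Q'$ of $Q$ meeting a neighbourhood of $c_0\in\partial\Omega$ is itself a boundary cube, so only boundary subcubes contribute to the "near" part — this is precisely why the right-hand side involves $\Eb$ and not $\E$.

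Next I would organize the sum by the dyadic annuli around $c_0$. For each generation $m\geq 0$, let $A_m$ be the union of the generation-$m$ subcubes $Q'$ with $\mathrm{dist}(c_0,Q')\approx 2^{-m}\ell(Q)$; these are boundary cubes. On $A_m$ we have $|x-c_0|\approx 2^{-m}\ell(Q)$, so
\eq{
\frac{1}{\ell(Q)}\int\limits_{A_m}|x-c_0||f(x)|\,dx \lesssim 2^{-m}\int\limits_{A_m}|f(x)|\,dx \lesssim 2^{-m}\!\!\!\sum\limits_{\genfrac{}{}{0pt}{-2}{Q'\in\B\cap\D_m(Q)}{Q'\subset A_m}}\int\limits_{Q'}|f(x)|\,dx.
}
Multiplying by $\|f\|_{L_1(Q)}^{p-1}$ and using that $\|f\|_{L_1(Q)}^{p-1}\int_{Q'}|f| \lesssim \big(\int_{Q'}|f|\big)^p + \big(\sum_{Q''\in\D_{m}(Q),\,Q''\ne Q'}\int_{Q''}|f|\big)^{p-1}\int_{Q'}|f|$ — or, more cleanly, by the elementary inequality $a^{p-1}b \lesssim \Me_p(a,b)$ combined with a Hölder-type summation over the generation — I would reduce a fixed-$m$ slice to $(1-\eps)^m$ (any geometric factor beats the factor $2^{-m}$, so inserting $(1-\eps)^m$ is harmless) times a quantity controlled by $\Eb_{Q,m}[f]-\Eb_{Q,m+1}[f]$. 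Summing over $m\geq 0$ then yields the claim. The precise bookkeeping that turns $2^{-m}\|f\|_{L_1(Q)}^{p-1}\sum_{Q'}\int_{Q'}|f|$ into a telescoping difference $\Eb_{Q,m}[f]-\Eb_{Q,m+1}[f]$ is exactly the computation performed for $\E_{Q,m}$ in~\cite{Stolyarov2021bis}; the only change is that throughout one restricts attention to boundary subcubes, which is consistent because passing to the parent preserves membership in $\B$.

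The main obstacle I expect is the combinatorial step relating $2^{-m}\|f\|_{L_1(Q)}^{p-1}\sum_{Q'\in\B\cap\D_m(Q)}\int_{Q'}|f|$ to the telescoped differences: one must split $\|f\|_{L_1(Q)}^{p-1}$ across the generation-$m$ cubes appropriately and invoke convexity/concavity of $t\mapsto t^p$ (with the two regimes $p\le 2$ and $p>2$ handled as in the definition of $\Me_p$), so that the "diagonal" contribution produces $\sum_{Q'}\big(\int_{Q'}|f|\big)^p = \Eb_{Q,m}[f]$ and the passage from generation $m$ to $m+1$ produces the difference with $\Eb_{Q,m+1}[f]$. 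Since this is precisely the argument already carried out in~\cite{Stolyarov2021bis} for $\E$, and the restriction to boundary cubes does not interfere with any step, I would present it by citing the corresponding estimate there and indicating the (purely notational) modifications, rather than reproducing the calculation in full.
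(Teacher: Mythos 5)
Your plan breaks at the very first step, where you discard the infimum by fixing an arbitrary point $c_0\in\partial\Omega\cap(d+2)Q$ independent of $f$. After that reduction the inequality you would need, namely
$\frac{\|f\|_{L_1(Q)}^{p-1}}{\ell(Q)}\int_Q|x-c_0||f|\lesssim\sum_{m\ge0}(1-\eps)^m\big(\Eb_{Q,m}[f]-\Eb_{Q,m+1}[f]\big)$ with that fixed $c_0$, is simply false: take $f$ to be a bounded approximation of a point mass at another boundary point $z\in\partial\Omega\cap Q$ with $|z-c_0|\approx\ell(Q)$. Then at every generation above the scale of the support all the mass lies in the single boundary cube containing $z$ and in its single (boundary) child, so every difference $\Eb_{Q,m}[f]-\Eb_{Q,m+1}[f]$ vanishes down to the support scale, and the tail contributes arbitrarily little as the support shrinks; yet your left-hand side is $\approx\|f\|_{L_1}^p\,|z-c_0|/\ell(Q)\approx\|f\|_{L_1}^p$. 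The lemma is only true because the point $c\in\partial\Omega$ may be chosen \emph{adapted to the mass distribution of} $f$: in the paper one descends through a greedy chain $R_0=Q\supset R_1\supset R_2\supset\cdots$, where $R_{m+1}$ is the boundary child of $R_m$ of maximal mass, takes $c_0$ to be the limit point of this chain (or the nearest boundary point to its last cube), and introduces a stopping time $M$ at the first generation where the mass along the chain drops by a factor $(1-\delta)$; the mass lost at each step, $\int_{R_m\setminus R_{m+1}}|f|$, is then what gets converted into the telescoping difference. Your annulus decomposition, being centered at a non-adaptive $c_0$, cannot see this mechanism, and no rearrangement of the subsequent bookkeeping can repair it.

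A second, smaller inaccuracy: the claim that restricting to boundary cubes is "purely notational" relative to the argument of the earlier paper is not right. At each step of the descent the cube $R_m$ may have children that are \emph{not} boundary cubes; their mass cannot be charged to $\Eb_{R_m,1}[f]$, so the elementary inequality from the earlier paper does not apply verbatim. The paper handles this with a modified inequality (Lemma~\ref{NewSimple}), in which the aggregate mass $Z$ of the non-boundary children enters as an extra non-negative term, giving
$\|f\|_{L_1(R_m)}^{p-1}\int_{R_m\setminus R_{m+1}}|f|\lesssim\Eb_{R_m,0}[f]-\Eb_{R_m,1}[f]$, after which the factors $2^{-m}(1-\delta)^{-m(p-1)}$ are absorbed into $(1-\eps)^m$ by the choice of $\delta$. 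Any correct proof has to address both of these points: the $f$-adapted choice of $c$ and the treatment of non-boundary children.
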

This lemma is similar to Lemma~$4.2$ in~\cite{Stolyarov2021bis}, the proof is also similar. We provide it for completeness. Lemma~$6.6$ from~\cite{Stolyarov2021bis} says 
\eq{\label{OldSimpleLemma}
\Big(\sum\limits_{j=1}^n z_j\Big)^p - \sum\limits_{j=1}^nz_j^p \gtrsim \Big(\sum\limits_{j=1}^n z_j\Big)^{p-1}\min\limits_{i}\sum\limits_{j\ne i}z_j
}
for any collection of non-negative numbers~$z_j$. We will need a slight modification.
\begin{Le}\label{NewSimple}
Let~$z_1,z_2,\ldots,z_n$, and~$Z$ be non-negative numbers. Then,
\eq{\label{NewSimpleLemma}
\Big(Z+\sum\limits_{j=1}^n z_j\Big)^p - \sum\limits_{j=1}^nz_j^p \gtrsim \Big(Z+\sum\limits_{j=1}^n z_j\Big)^{p-1}\Big(Z+\min\limits_{i}\sum\limits_{j\ne i}z_j\Big).
}
\end{Le}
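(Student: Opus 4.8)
The plan is to reduce Lemma~\ref{NewSimple} to the already-established inequality~\eqref{OldSimpleLemma}. The left-hand side of~\eqref{NewSimpleLemma} involves $(Z+\sum z_j)^p - \sum z_j^p$, which differs from the expression appearing in~\eqref{OldSimpleLemma} only in that the leading power includes the extra term $Z$. So the first thing I would do is split the left-hand side as
\eq{
\Big(Z+\sum_{j=1}^n z_j\Big)^p - \sum_{j=1}^n z_j^p = \bigg[\Big(Z+\sum_{j=1}^n z_j\Big)^p - \Big(\sum_{j=1}^n z_j\Big)^p\bigg] + \bigg[\Big(\sum_{j=1}^n z_j\Big)^p - \sum_{j=1}^n z_j^p\bigg],
}
and estimate the two bracketed quantities separately from below.

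For the second bracket, I would apply~\eqref{OldSimpleLemma} directly, obtaining a lower bound of order $\big(\sum z_j\big)^{p-1}\min_i\sum_{j\ne i}z_j$. For the first bracket, since $p \geq 1$, the function $t\mapsto t^p$ is convex, so $(a+b)^p - a^p \geq b^p$ and also $(a+b)^p - a^p \geq p\, a^{p-1} b \gtrsim a^{p-1}b$; more usefully, by homogeneity and a short elementary argument one has $(a+b)^p - a^p \gtrsim (a+b)^{p-1} b$ for all $a,b\geq 0$. Applying this with $a=\sum z_j$ and $b=Z$ yields
\eq{
\Big(Z+\sum_{j=1}^n z_j\Big)^p - \Big(\sum_{j=1}^n z_j\Big)^p \gtrsim \Big(Z+\sum_{j=1}^n z_j\Big)^{p-1} Z.
}
Combining the two lower bounds, and using that $\big(\sum z_j\big)^{p-1}\leq \big(Z+\sum z_j\big)^{p-1}$ to upgrade the exponent base in the contribution from the second bracket, gives
\eq{
\Big(Z+\sum_{j=1}^n z_j\Big)^p - \sum_{j=1}^n z_j^p \gtrsim \Big(Z+\sum_{j=1}^n z_j\Big)^{p-1}\Big(Z + \min_i\sum_{j\ne i}z_j\Big),
}
which is exactly~\eqref{NewSimpleLemma}.

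I expect no serious obstacle here; the only mild care needed is in the auxiliary elementary estimate $(a+b)^p - a^p \gtrsim (a+b)^{p-1}b$, which should be checked by dividing by $(a+b)^p$ and reducing to the one-variable inequality $1 - t^p \gtrsim 1-t$ for $t\in[0,1]$ (valid because $1-t^p = (1-t)(1+t+\cdots+t^{p-1})$ when $p$ is an integer, and in general because $1-t^p \geq (1-t)$ fails — rather one uses $1-t^p \geq c_p(1-t)$ which does hold since the ratio $(1-t^p)/(1-t)$ is bounded below on $[0,1]$ by its limit considerations; more robustly, $1-t^p\geq 1-t$ is false for $p>1$, so instead one writes $1-t^p = \int_t^1 p s^{p-1}\,ds \geq p\,t^{p-1}(1-t)$, or notes $(a+b)^p-a^p\geq b^p$ and $(a+b)^p-a^p \geq p a^{p-1}b$, then interpolates to get the $(a+b)^{p-1}b$ form). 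Once that elementary fact is in hand, the rest is bookkeeping, and the structure of the proof parallels that of Lemma~$6.6$ in~\cite{Stolyarov2021bis} with the single extra term $Z$ tracked throughout.
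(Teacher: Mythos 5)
Your route is genuinely different from the paper's. The paper proves the lemma by a case split on whether $\max_j z_j\ge Z$: in that case it applies \eqref{OldSimpleLemma} to the enlarged collection $z_1,\dots,z_n,Z$ and simply discards the extra $Z^p$ term; in the opposite case $Z>\max_j z_j$ it normalizes $Z=1$ and observes that the left-hand side is at least $1$ while the right-hand side is at most $(n+1)^p$ (so the constant there depends on $n$, which is harmless since in the application $n\le 2^d$). Your decomposition into $\big[(Z+S)^p-S^p\big]+\big[S^p-\sum_j z_j^p\big]$, with $S=\sum_j z_j$, is a legitimate alternative and, once completed, even gives a constant independent of $n$. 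Also, your long aside about the elementary estimate is resolved more simply than you fear: for $t\in[0,1]$ and $p\ge 1$ one has $t^p\le t$, hence $1-t^p\ge 1-t$, so $(a+b)^p-a^p\ge (a+b)^{p-1}b$ holds with constant $1$; your claim that this inequality ``fails'' for $p>1$ is incorrect.

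The genuine problem is the final combination step. You arrive at the lower bound $(Z+S)^{p-1}Z + c\,S^{p-1}m$, where $m=\min_i\sum_{j\ne i}z_j$, and you need $(Z+S)^{p-1}(Z+m)$. To convert $S^{p-1}m$ into $(Z+S)^{p-1}m$ inside a lower bound you would need $S^{p-1}\gtrsim (Z+S)^{p-1}$; the inequality you invoke, $S^{p-1}\le (Z+S)^{p-1}$, goes the wrong way and only bounds that term from above, and indeed $S^{p-1}m\gtrsim(Z+S)^{p-1}m$ is false when $Z\gg S$ and $m>0$. The fix is a short case analysis that you omitted: if $Z\ge m$, then $(Z+S)^{p-1}(Z+m)\le 2(Z+S)^{p-1}Z$, which is already covered by the first bracket alone; if $Z<m$, then $Z<m\le S$, so $Z+S\le 2S$ and $Z+m\le 2m$, whence $(Z+S)^{p-1}(Z+m)\le 2^{p}S^{p-1}m$, which is covered by the second bracket. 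With that two-case argument inserted, your proof is complete and correct.
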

\begin{proof}
Without loss of generality, let~$z_1$ be the largest of the~$z_j$. Consider two cases:~$z_1 \geq Z$ and~$Z > z_1$.

The first case follows from~\eqref{OldSimpleLemma} (applied to all the numbers~$z_1,z_2,\ldots,z_n$, and~$Z$):
\eq{
\Big(Z+\sum\limits_{j=1}^n z_j\Big)^{p-1}\Big(Z+\min\limits_{i}\sum\limits_{j\ne i}z_j\Big) \lesssim \Big(Z+\sum\limits_{j=1}^n z_j\Big)^p - \sum\limits_{j=1}^nz_j^p - Z^p \leq\Big(Z+\sum\limits_{j=1}^n z_j\Big)^p - \sum\limits_{j=1}^nz_j^p.
}

For the second case, we may, having in mind the homogeneity of the inequality in question, assume without loss of generality that~$Z = 1$ and, therefore~$z_j< 1$ for~$j=1,2,\ldots,n$. We note that in such a case the left hand side of~\eqref{NewSimpleLemma} is at least~$1$ (by H\"older's inequalty), whereas the right hand side does not exceed~$(n+1)^p$.
\end{proof}
\begin{proof}[Proof of Lemma~\ref{SecondCore}]
We will construct the cubes~$R_m$ starting with~$R_0 = Q$. Assume~$R_0, R_1,\ldots, R_{m}$ are already constructed. Then, we define~$R_{m+1}$ by the rules
\eq{
R_{m+1}\in \D_{1}(R_m),\ R_{m+1}\in \B,\quad \int\limits_{R_{m+1}}|f(x)|\,dx  = \max\limits_{R\in \D_1(R_m)\cap\B}\int\limits_R|f(x)|\,dx.
}
In other words, if the boundary cube~$R_m$ has kids that are also boundary cubes, we choose the one among them that has the maximal possible mass. Note that there may be no boundary cubes in~$\D_1(R_m)$. In such a case, the process terminates (this means, in particular, that the initial cube~$Q$ does not intersect~$\Omega$). Let~$c_0$ be the common point of the cubes~$R_m$ if there are infinitely many of them, or the closest point of~$\partial \Omega$ to the smallest of the cubes if there is only a finite number of cubes. Then,
\eq{\label{BoundaryDistance}
\forall m\ \forall x\in R_m\qquad |x-c_0|\lesssim 2^{-m}\ell(Q).
}

Pick a small number~$\delta$ to be chosen later. Now let~$M$ be the smallest possible non-negative integer number~$m$ such that
\eq{
\int\limits_{R_{m+1}}|f(x)|\,dx < (1-\delta)\int\limits_{R_{m}}|f(x)|\,dx.
}
If such a number~$m$ does not exist, then we set~$M=\infty$. We estimate
\eq{\label{eq428}
\int\limits_{Q}\frac{|x-c_0|}{\ell(Q)}|f(x)|\,dx \Lseqref{BoundaryDistance}\!\!\! \int\limits_{R_{M+1}}2^{-M-1}|f(x)|\,dx + \sum\limits_{m=0}^{M}\int\limits_{R_{m}\setminus R_{m+1}}2^{-m}|f(x)|\,dx.
}
The choice of~$M$ allows to disregard the first summand since it is bounded by the second one (the multiplicative constant is~$O(1/\delta)$, however, this does not harm since we will fix~$\delta$ momentarily). By the choice of~$M$,
\eq{\label{eq429}
\|f\|_{L_1(Q)} \leq (1-\delta)^{-m} \|f\|_{L_1(R_m)},\qquad m=0,1,\ldots, M.
}
Thus,
\eq{\label{SomeEquation}
\frac{\|f\|_{L_1(Q)}^{p-1}}{\ell(Q)}\inf\limits_{c\in\partial\Omega}\int\limits_{Q}|x-c||f(x)|\,dx \LseqrefTwo{eq428}{eq429} \sum\limits_{m=0}^M(1-\delta)^{-m(p-1)}\|f\|_{L_1(R_m)}^{p-1}\!\!\!\int\limits_{R_{m}\setminus R_{m+1}}\!\!\!2^{-m}|f(x)|\,dx.
}
Let~$R'_1,R'_2,\ldots,R'_n$ be the kids of~$R_m$ that are boundary cubes; let~$R_{m+1} = R'_1$. We apply Lemma~\ref{NewSimple} with with~$Z = \int_{R_{m}\setminus \cup_j R'_j}|f|$, and~$z_j = \int_{R'_j}|f|$.  Then, the right hand side in~\eqref{SomeEquation} is bounded by
\eq{
\sum\limits_{m=0}^M 2^{-m}(1-\delta)^{-m(p-1)} \big(\Eb_{R_m,0}[f] - \Eb_{R_{m},1}[f]\big)\leq
 \sum\limits_{m \geq 0}(1-\eps)^m\Big(\Eb_{Q,m}[f] - \Eb_{Q,m+1}[f]\Big),
}
provided~$(1-\eps) \geq \frac12 (1-\delta)^{-(p-1)}$ (this defines the choice of~$\delta$).
\end{proof}
\begin{proof}[Proof of Theorem~\ref{BesovTheorem}.]
By Theorem~\ref{Th41}, it suffices to justify~\eqref{eq410}, \eqref{eq411}, and~\eqref{eq412}, of which only~\eqref{eq411} remains unverified. Without loss of generality, we may assume~$\Omega$ lies inside a dyadic cube~$Q$ of generation~$0$. The three lattice theorem (see the explanation right after formula~$(4.11)$ in~\cite{Stolyarov2021bis}) says we may replace the cubes~$3Q_{n,j}$ with simply~$Q_{n,j}$ in our estimates. We apply Lemma~\ref{SecondCore} to each of the dyadic cubes on the left hand side of~\eqref{eq411} and obtain
\mlt{
\sum\limits_{n\geq 0}2^n\!\!\!\sum\limits_{\genfrac{}{}{0pt}{-2}{j\in\Z^d}{Q_{n,j}\in\B}}\!\!\!\|f\|_{L_1(Q_{n,j})}^{p-1}\inf\limits_{c_{n,j}\in\partial\Omega}\!\!\!\int\limits_{Q_{n,j}}\!\!\!|x-c_{n,j}||f(x)|\,dx\\
\lesssim \sum\limits_{n\geq 0}\!\!\!\sum\limits_{\genfrac{}{}{0pt}{-2}{j\in\Z^d}{Q_{n,j}\in\B}}\!\!\! \sum\limits_{m \geq 0}(1-\eps)^m\big(\Eb_{Q_{n,j},m}[f] - \Eb_{Q_{n,j},m+1}[f]\big) = \\
\sum\limits_{k \geq 0}\Big(\sum\limits_{l \leq k}(1-\eps)^l\Big)\big(\Eb_{Q,k}[f] - \Eb_{Q,k+1}[f]\big)\Lseqref{Telescopic}\|f\|_{L_1(Q)}^p.
}
\end{proof}

\section{End of proof and concluding remarks}\label{S5}
\begin{proof}[Proof of Theorem~\ref{BoundedDomainTheorem}]
We have already proved necessity of~\eqref{CancellationNew} (see Section~\ref{S2}). Assume now~\eqref{CancellationNew} holds. Then,~\eqref{CancellationOld} holds as well. We wish to prove~\eqref{BoundedInequality}. Fix some compactly supported bounded function~$f$. We start with the bound
\eq{\label{Telescopic}
\Big|\int\limits_\Omega \Phi(K*f)\Big|\leq \sum\limits_{n \geq 0}\Big|\int\limits_\Omega \Phi(K_{\leq n+1}*f) - \Phi(K_{\leq n}*f)\Big| + \Big|\int\limits_\Omega\Phi(K_{\leq 0}*f)\Big|,
}
which is true since~$\Phi(K_{\leq n+1}*f)\to\Phi(K*f)$ pointwise, and this sequence of functions is uniformly bounded (recall~$f$ is bounded and has compact support). The second summand on the right hand side of the above inequality was estimated in Lemma~\ref{LowFrequency}. The series in the first summand is bounded with the help of Lemma~\ref{Lemma32}, inequality~\eqref{MpEstimate}, and Theorem~\ref{BesovTheorem}.
\end{proof}

\begin{proof}[Proof of Theorem~\ref{HalfSpaceTheorem}]
We have already proved necessity of~\eqref{CancellationNew} (see Section~\ref{S2}). Let us prove sufficiency. Without loss of generality,~$\xi = (0,0,,\ldots,0,1)$,~\eqref{CancellationNew} holds true with this particular~$\xi$ and~\eqref{CancellationOld} is true as well. By dilation invariance of the problem, we may also assume that~$f$ is supported in the unit ball. We apply the same telescopic summation trick~\eqref{Telescopic}. 

In this case, we need the condition~$\int f = 0$ to bound the second term in~\eqref{Telescopic}; this is done with the help of Lemma~$2.4$ in~\cite{Stolyarov2021bis}. To bound the series, we apply Lemma~\ref{Lemma32}, use~\eqref{MpEstimate}, and only need to justify a version of Theorem~\ref{BesovTheorem} for the case where~$\Omega$ is the halfspace. Specifically, we need to prove
\eq{
\sum\limits_{n\in \Z}\Big|\int\limits_{x_d > 0} \Phi(K_n*f(x))\,dx\Big|\lesssim \|f\|_{L_1(\R^d)}^p.
}
In fact, the proof in this case is the same as in the case of bounded~$\Omega$ (recall~$f$ is supported in the unit ball and all the functions~$f*K_n$ are supported in the ball of radius two centered at the origin), however, in Lemma~\ref{MainBoundaryLemma}, we will use~\eqref{CancellationNew} only for~$\xi = (0,0,\ldots,0,1)$ since there are no other normal vectors to the boundary.
\end{proof}

\begin{proof}[Proof of Corollary~\ref{HarmonicHalfspaceCorollary}.]
Without loss of generality, let~$\xi = (0,0,\ldots,0,1)$. Consider the symmetric extension of the function~$u$:
\eq{
\tilde{u}(x) = \begin{cases}
u(x),\quad &x_d \geq 0;\\
u(-x),\quad &x_d < 0.
\end{cases}
}
The function~$\tilde{u}$ is then compactly supported and its distributional gradient satisfies 
\eq{
\big\|\Delta\tilde{u}\big\|_{\MM} \leq 2\Big(\int\limits_{\Omega}\big|\Delta u(x)\big|\,dx+ \int\limits_{\R^{d-1}}\Big|\frac{\partial u}{\partial x_d}(x_1,x_2,\ldots,x_{d-1},0)\Big|\,dx_1\,dx_2\ldots dx_{d-1}\Big). 
}
The norm~$\|\cdot\|_{\MM}$ is the total variation of a (signed) measure. We apply Theorem~\ref{HalfSpaceTheorem} to the function~$\Delta \tilde{u}$ in the role of~$f$ (the details about substituting a measure instead of an~$L_1$ function into Theorem~\ref{HalfSpaceTheorem} are left to the reader; note that here~$\nabla \tilde{u}$ is a uniformly bounded function). More specifically, we use~\eqref{FormulaForGradient} and represent~$\nabla \tilde{u}$ as~$K*f$, where~$K(\zeta) = c_d \zeta$,~$\zeta \in S^{d-1}$, and~$f = \Delta \tilde{u}$. It remains to say that the cancellation condition~\eqref{CancellationNew} reduces to~\eqref{SpecificCancellation} in this case.
\end{proof}
The proof of Corollary~\ref{HarmonicCorollary} is based on several results about harmonic functions. Though they seem to be folklore, the author did not manage to find transparent references and provides some details of proofs. 
\begin{St}\label{ExtensionTheorem}
Let~$\Omega$ be a bounded subdomain of~$\R^d$ with smooth boundary. For any smooth function~$u$ on~$\bar\Omega$ there exists a continuous compactly supported function~$v\colon \R^d \to \R$ such that~$\nabla v = \nabla u$ on~$\Omega$ and
\eq{
\|\Delta v\|_{\MM}\lesssim  \|\Delta u\|_{L_1(\Omega)} + \Big\|\frac{\partial u}{\partial n}\Big\|_{L_1(\partial \Omega)};
}
the implicit multiplicative constant in the inequality above does not depend on the particular choice of~$u$.
\end{St}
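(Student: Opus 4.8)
The plan is to construct~$v$ by subtracting from (a fixed cutoff of)~$u$ a correction that accounts for the jump in the normal derivative across~$\partial\Omega$, so that the distributional Laplacian of the resulting compactly supported function has no singular part on~$\partial\Omega$ beyond a controlled boundary layer, and then to estimate the total variation of that Laplacian. Concretely, fix a smooth compactly supported cutoff~$\chi$ equal to~$1$ on a neighbourhood of~$\bar\Omega$ and set~$w = \chi u$ inside~$\bar\Omega$. The naive extension by~$0$ outside~$\Omega$ has distributional Laplacian equal to~$(\Delta u)\chi_\Omega$ plus a single-layer distribution~$(\partial u/\partial n)\,d\mathcal H^{d-1}|_{\partial\Omega}$ plus a double-layer term coming from the jump of~$u$ itself; the double layer is the obstruction, so instead I would extend~$u$ to the exterior in a way that is~$C^1$ across~$\partial\Omega$. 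The cleanest route: solve an exterior problem. Let~$g$ be the harmonic extension of~$u|_{\partial\Omega}$ to~$\R^d\setminus\bar\Omega$ (decaying at infinity), multiply it by~$\chi$ to make it compactly supported, and define~$v = \chi u$ on~$\bar\Omega$ and~$v = \chi g$ outside. Then~$v$ is continuous across~$\partial\Omega$ by construction, compactly supported, and~$\nabla v = \nabla u$ on~$\Omega$ provided~$\chi\equiv 1$ there. Its distributional Laplacian is~$\Delta u$ on~$\Omega$, is supported in the region~$\{\chi\ne 1\}$ (which is disjoint from~$\bar\Omega$, hence harmless — a bounded smooth function there) away from~$\partial\Omega$, and carries a single-layer measure on~$\partial\Omega$ whose density is~$\partial u/\partial n - \partial g/\partial n$, the jump of the normal derivative.

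The heart of the matter is therefore to bound~$\|\partial g/\partial n\|_{L_1(\partial\Omega)}$ by the two terms on the right-hand side. Here~$\partial g/\partial n$ is the exterior Dirichlet-to-Neumann image of~$u|_{\partial\Omega}$. The idea is to compare it with the interior Dirichlet-to-Neumann map: write~$u|_{\partial\Omega}$ as the interior trace of~$u$, so that~$\DNi(u|_{\partial\Omega})$ differs from~$\partial u/\partial n$ by exactly the harmonic-extension correction, which is controlled by~$\Delta u$; more precisely, if~$h$ solves the interior Dirichlet problem with data~$u|_{\partial\Omega}$, then~$u - h$ vanishes on~$\partial\Omega$ and~$\Delta(u-h) = \Delta u$, so~$\partial(u-h)/\partial n$ is controlled in~$L_1(\partial\Omega)$ by~$\|\Delta u\|_{L_1(\Omega)}$ via the solution operator of the Dirichlet problem with~$L_1$ right-hand side and zero boundary data (an estimate on the normal derivative of such solutions). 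Consequently~$\|\partial h/\partial n\|_{L_1(\partial\Omega)}\lesssim \|\partial u/\partial n\|_{L_1(\partial\Omega)} + \|\Delta u\|_{L_1(\Omega)}$. It then remains to bound~$\|\partial g/\partial n - (-\partial h/\partial n)\|_{L_1}$, i.e.\ the sum of the interior and exterior DN maps applied to the same boundary datum~$\varphi = u|_{\partial\Omega}$: the operator~$\DNi + \DNo$ is a classical pseudodifferential operator on~$\partial\Omega$, and in fact it is of order~$\le 0$ — the principal symbols~$|\xi|$ of~$\DNi$ and~$|\xi|$ of~$\DNo$ cancel in the sum, since the interior and exterior Poisson kernels have opposite-signed leading normal derivatives. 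Hence~$\DNi+\DNo$ is bounded on~$L_1(\partial\Omega)$ (a zero-order $\Psi$DO on a compact manifold is an $L_1$-bounded Calderón–Zygmund operator), and~$\|(\DNi+\DNo)\varphi\|_{L_1}\lesssim \|\varphi\|_{L_1(\partial\Omega)}$. Finally~$\|\varphi\|_{L_1(\partial\Omega)} = \|u|_{\partial\Omega}\|_{L_1}$ must itself be bounded by the right-hand side; this follows because the mean-zero part of~$u|_{\partial\Omega}$ can be absorbed (the constant part of~$u$ does not affect~$\nabla u$, so one may normalize~$\int_{\partial\Omega}u\,d\mathcal H^{d-1}=0$) and then~$\|u|_{\partial\Omega}\|_{L_1} \lesssim \|\nabla u\|_{L_1(\Omega)}\lesssim \ldots$; one controls~$\|\nabla u\|_{L_1(\Omega)}$ itself by~$\|\Delta u\|_{L_1(\Omega)}+\|\partial u/\partial n\|_{L_1(\partial\Omega)}$ using the representation analogous to~\eqref{FormulaForGradient} on~$\Omega$ together with Green's formula (this is the statement that~$\nabla u$ is an $L_1$-bounded combination of the single- and double-layer data, again a $\Psi$DO/CZ argument, or simply the $d=1$-type trace inequality after reducing to mean zero).

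Assembling the pieces: the distributional Laplacian of~$v$ equals~$(\Delta u)\chi_\Omega$, plus a bounded smooth compactly supported function (from the region where~$\chi$ transitions, times~$u$ or~$g$ and their derivatives — finite mass because~$g$ decays and is smooth away from~$\partial\Omega$), plus the single layer~$(\partial u/\partial n - \partial g/\partial n)\,d\mathcal H^{d-1}|_{\partial\Omega}$; taking total variation and invoking the three bounds above gives~$\|\Delta v\|_{\MM}\lesssim \|\Delta u\|_{L_1(\Omega)}+\|\partial u/\partial n\|_{L_1(\partial\Omega)}$, with constant depending only on~$\Omega$ and~$\chi$. I expect the main obstacle to be the $L_1$-endpoint estimates: pseudodifferential operators of nonnegative order are \emph{not} bounded on~$L_1$ in general, so one must be careful that every operator that ultimately acts on an~$L_1$ object (the DN sum~$\DNi+\DNo$, and the solution operator for the Dirichlet problem with~$L_1$ datum) is genuinely of order~$\le 0$ and of Calderón–Zygmund type, and that the normal-derivative trace of the~$L_1$-Dirichlet solution is itself in~$L_1(\partial\Omega)$ with the right bound — this last point is the most delicate and is presumably where the smoothness of~$\partial\Omega$ is genuinely used, as flagged in the remark following Corollary~\ref{HarmonicCorollary}.
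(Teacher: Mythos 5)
Your construction is the same as the paper's (exterior harmonic extension of~$u|_{\partial\Omega}$, cutoff, and a single-layer jump term on~$\partial\Omega$), and your reduction of the interior bound to the normal derivative of the~$L_1$-Dirichlet solution is equivalent to the paper's Lemma~\ref{CannyTrick}, which proves it cleanly by duality (Green's formula plus the maximum principle). The genuine gap is in the step where you compare the two Dirichlet-to-Neumann maps. You argue that~$\DNi+\DNo$ (in your fixed-normal convention, the difference of the two maps) has order~$\leq 0$ because the principal symbols cancel --- correct --- and then conclude that it is bounded on~$L_1(\partial\Omega)$ because ``a zero-order $\Psi$DO on a compact manifold is an $L_1$-bounded Calder\'on--Zygmund operator.'' That claim is false: order-zero pseudodifferential operators (and Calder\'on--Zygmund operators generally) map~$L_1$ only into weak~$L_1$, not into~$L_1$; and the zeroth-order symbol of~$\DNi-\DNo$ involves the second fundamental form contracted against~$\xi/|\xi|$, so except for umbilical boundaries it is a genuine singular integral, not a multiplication operator, and no~$L_1\to L_1$ bound is available. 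Moreover, even granting that bound, your route requires~$\|u|_{\partial\Omega}\|_{L_1(\partial\Omega)}\lesssim \|\Delta u\|_{L_1(\Omega)}+\|\partial u/\partial n\|_{L_1(\partial\Omega)}$ after normalizing the mean; your sketch of this passes through~$\|\nabla u\|_{L_1(\Omega)}\lesssim \|\Delta u\|_{L_1(\Omega)}+\|\partial u/\partial n\|_{L_1(\partial\Omega)}$, whose suggested proof via the Green representation reintroduces the gradient of the double-layer potential of~$u|_{\partial\Omega}$ --- again a borderline operator on~$L_1$, and circular, since it acts on the very trace you are trying to control.

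The paper sidesteps both difficulties by comparing the two maps multiplicatively rather than additively: using a left parametrix of the elliptic operator~$\DNi$, one writes~$\DNo f=\DNi f+\mathrm{Er}\,\DNi f$ (modulo a smoothing remainder), where~$\mathrm{Er}$ has order~$-1$ and hence an integrable kernel on the compact boundary, so it \emph{is} bounded on~$L_1$. The error then acts on~$\DNi f$, which is exactly the quantity already controlled by Lemma~\ref{CannyTrick}, so neither an~$L_1$ bound for an order-zero operator nor any control of~$\|u|_{\partial\Omega}\|_{L_1}$ is needed. Your argument can be repaired in the same way --- compose your order-zero difference with the parametrix of~$\DNi$ so that the operator landing on an~$L_1$ object has order~$-1$ --- but as written the~$L_1$ boundedness claim is the step that fails, and it is precisely the endpoint issue you flagged at the end of your plan.
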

\begin{Le}\label{CannyTrick}
Let~$F\colon \Omega \to \R$ be a harmonic function continuous up to the boundary, let~$f$ be its boundary values. Let~$u\colon \Omega \to \R$ be another function with the same boundary values~$f$. Then,
\eq{
\Big\|\frac{\partial F}{\partial n}\Big\|_{L_1(\partial \Omega)} \leq \|\Delta u\|_{L_1(\Omega)} + \Big\|\frac{\partial u}{\partial n}\Big\|_{L_1(\partial \Omega)}.
}
\end{Le}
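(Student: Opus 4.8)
The statement to be proven is Lemma~\ref{CannyTrick}: if $F$ is harmonic on $\Omega$ with boundary values $f$, and $u$ is any other function with the same boundary values, then $\|\partial F/\partial n\|_{L_1(\partial\Omega)} \leq \|\Delta u\|_{L_1(\Omega)} + \|\partial u/\partial n\|_{L_1(\partial\Omega)}$.

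The natural approach is to realize $\partial F/\partial n$ as a pairing against the Dirichlet-to-Neumann operator and then to exploit that the harmonic function is, among all functions with given boundary data, the one whose normal derivative is controlled by interior data via Green's identity. Concretely, let me pair $\partial F/\partial n$ against an arbitrary test function $g\in C^\infty(\partial\Omega)$ with $\|g\|_{L_\infty(\partial\Omega)}\le 1$ and extract the $L_1$-norm by duality. Let $G$ be the harmonic extension of $g$ to $\Omega$. Then Green's second identity gives
\eq{
\int\limits_{\partial\Omega} g\,\frac{\partial F}{\partial n} = \int\limits_{\partial\Omega} F\,\frac{\partial G}{\partial n} = \int\limits_{\partial\Omega} f\,\frac{\partial G}{\partial n},
}
where the middle equality uses that both $F$ and $G$ are harmonic (so the bulk terms $\int_\Omega(G\Delta F - F\Delta G)$ vanish), and the last equality uses $F|_{\partial\Omega}=f$. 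Now I want to replace $f$ by $u$ on the boundary, which is legitimate since $u|_{\partial\Omega}=f$ as well, and then integrate by parts the other way using $u$ (which is \emph{not} harmonic):
\eq{
\int\limits_{\partial\Omega} f\,\frac{\partial G}{\partial n} = \int\limits_{\partial\Omega} u\,\frac{\partial G}{\partial n} = \int\limits_{\partial\Omega} \frac{\partial u}{\partial n}\,G + \int\limits_\Omega \big(u\,\Delta G - G\,\Delta u\big) = \int\limits_{\partial\Omega}\frac{\partial u}{\partial n}\,G - \int\limits_\Omega G\,\Delta u,
}
again using $\Delta G = 0$. The key point now is the maximum principle: since $\|g\|_{L_\infty(\partial\Omega)}\le 1$ and $G$ is the harmonic extension of $g$, we have $\|G\|_{L_\infty(\Omega)}\le 1$ and $\|G\|_{L_\infty(\partial\Omega)}\le 1$. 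Therefore
\eq{
\Big|\int\limits_{\partial\Omega} g\,\frac{\partial F}{\partial n}\Big| \le \Big\|\frac{\partial u}{\partial n}\Big\|_{L_1(\partial\Omega)} + \|\Delta u\|_{L_1(\Omega)},
}
and taking the supremum over all such $g$ yields the claim.

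The main technical obstacle is regularity: to run Green's identity cleanly I need $F$ to have a normal derivative that makes sense in $L_1(\partial\Omega)$ in the first place, which is exactly the quantity being estimated, so some care is needed to avoid circularity. The clean way is to prove the inequality first for smooth boundary data $f$ (so that $F$, $G$ are smooth up to $\bar\Omega$ by elliptic regularity, and all integrations by parts are classical), and then—if needed—pass to the limit, though in the application within Corollary~\ref{HarmonicCorollary} the data will already be smooth so this suffices. A secondary point is to justify replacing $f$ by $u$ in the boundary integral and integrating by parts against the non-harmonic $u$: this just requires $u\in C^2(\bar\Omega)$ (or $u$ smooth, as in the corollary), so that $\Delta u\in L_1(\Omega)$ and the boundary term $\int_{\partial\Omega}(\partial u/\partial n)G$ is well defined; no maximum principle or harmonicity of $u$ is used there. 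Thus the only place the harmonicity of $F$ enters essentially is in killing the bulk term in the first Green identity, and the only place the maximum principle enters is the bound $\|G\|_{L_\infty}\le\|g\|_{L_\infty(\partial\Omega)}$ — everything else is bookkeeping.
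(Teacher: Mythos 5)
Your proposal is correct and is essentially the paper's own argument: the paper also realizes $\|\partial F/\partial n\|_{L_1(\partial\Omega)}$ by pairing with a bounded test function $\varphi$ on $\partial\Omega$ (a near-maximizer with a $(1+\eps)$ loss rather than a duality supremum), takes its harmonic extension $\Phi$, applies Green's identity twice (once with the harmonic pair $F,\Phi$, once with $u,\Phi$), and concludes via the maximum principle $\|\Phi\|_{L_\infty}\le 1$. The only differences are cosmetic (sup over test functions vs.\ $\eps$-maximizer, and your extra remarks on regularity/sign bookkeeping).
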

\begin{proof}
Fix small~$\eps > 0$. Let~$\varphi$ be a smooth function on~$\partial \Omega$ such that~$\|\varphi\|_{L_\infty}\leq 1$ and
\eq{
\Big\|\frac{\partial F}{\partial n}\Big\|_{L_1(\partial \Omega)} \leq (1+\eps)\int\limits_{\partial\Omega} \varphi\, \frac{\partial F}{\partial n}.
}
Let~$\Phi\colon \Omega \to \R$ be the solution of the Dirichlet problem with the boundary data~$\varphi$.  Then, by Green's formula
\eq{
\int\limits_{\partial\Omega} \varphi\, \frac{\partial F}{\partial n} = - \int\limits_{\partial \Omega} f\,\frac{\partial \Phi}{\partial n} =- \int\limits_{\partial \Omega} \varphi\,\frac{\partial u}{\partial n} + \int\limits_\Omega \Delta u\, \Phi.
}
By the maximum principle,~$\|\Phi\|_{L_\infty} \leq 1$. Therefore, by the above,
\eq{
\int\limits_{\partial\Omega} \varphi\, \frac{\partial F}{\partial n} \leq \|\Delta u\|_{L_1(\Omega)} + \Big\|\frac{\partial u}{\partial n}\Big\|_{L_1(\partial \Omega)}.
}
It remains to take arbitrarily small~$\eps$.
\end{proof}
\begin{proof}[Proof of Proposition~\ref{ExtensionTheorem}.]
We consider the Dirichlet problem on the domain~$\R^d \setminus \Omega$ with the boundary data~$u|_{\partial \Omega}$ (see $\S 23$ and~$\S 26$ in~\cite{Vladimirov1971} for the correct conditions at infinity). Denote its solution by~$U$. We claim that
\eq{\label{DNBound}
\int\limits_{\partial \Omega}\Big|\frac{\partial U}{\partial n}\Big| \lesssim\|\Delta u\|_{L_1(\Omega)} + \Big\|\frac{\partial u}{\partial n}\Big\|_{L_1(\partial \Omega)}.
}
We hope that identical notation for the inward and outward pointing normal vectors does not lead  to ambiguity. Let us assume this bound for a while and construct the desired function~$v$. Let~$\Omega \subset B_{R/2}(0)$. Denote by~$\mathcal{R}$ the spherical layer~$B_{2R}(0)\setminus B_R(0)$. We will need the standard bound
\eq{\label{StandardBound}
\|\nabla U\|_{L_\infty(\mathcal{R})}\lesssim \int\limits_{\partial \Omega}\Big|\frac{\partial U}{\partial n}\Big|.
}
Let~$\Psi$ be a smooth function that equals~$1$ on~$B_R(0)$ and zero outside~$B_{2R}(0)$; set
\eq{
W(x) = \begin{cases}
U(x),\qquad &x\notin\Omega;\\
u(x),\qquad &x\in \Omega,
\end{cases}
}
and finally define~$v$ by the rule
\eq{
v(x) = (W(x) - W(x_0))\Psi(x),\qquad x\in \R^d,
}
where~$x_0$ is an arbitrary point in~$\mathcal{R}$. Then,
\eq{
\Delta v(x) = (W(x) - W(x_0))\Delta\Psi(x) + 2\scalprod{\nabla W(x)}{\nabla \Psi(x)} + \Delta W(x)\Psi(x).
}
Let us estimate the total variation of each summand individually:
\begin{enumerate}[1)]
\item 
\mlt{
\int\limits_{\R^d}|W(x) - W(x_0)||\Delta\Psi(x)|\,dx \lesssim \int\limits_{\mathcal{R}}|W(x) - W(x_0)|\,dx \lesssim \int\limits_{\mathcal{R}}|\nabla U(x)|\,dx\\ \LseqrefTwo{StandardBound}{DNBound} \|\Delta u\|_{L_1(\Omega)} + \Big\|\frac{\partial u}{\partial n}\Big\|_{L_1(\partial \Omega)};
}
\item 
\eq{
\int\limits_{\R^d}\Big|\scalprod{\nabla W(x)}{\nabla \Psi(x)}\Big|\,dx \lesssim \int\limits_{\mathcal{R}}|\nabla U(x)|\,dx \lesssim \|\Delta u\|_{L_1(\Omega)} + \Big\|\frac{\partial u}{\partial n}\Big\|_{L_1(\partial \Omega)};
}
\item 
\eq{
\big\|\Psi \Delta W\big\|_{\MM} \leq \int\limits_{\Omega}|\Delta u(x)|\,dx + \int\limits_{\partial \Omega}\Big|\frac{\partial u}{\partial n}\Big| +  \int\limits_{\partial \Omega}\Big|\frac{\partial U}{\partial n}\Big| \lesssim \|\Delta u\|_{L_1(\Omega)} + \Big\|\frac{\partial u}{\partial n}\Big\|_{L_1(\partial \Omega)}.
}
\end{enumerate}

Thus, it remains to prove~\eqref{DNBound}. Let~$\DNo$ and~$\DNi$ be the outer and inner Dirichlet-to-Neumann operators. By Subsection~$7.10$ in~\cite{Taylor1996}, both these are pseudodifferential elliptic operators of order~$1$; their principal symbols are equal to~$\sqrt{\Delta_{\partial\Omega}}$, where~$\Delta_{\partial \Omega}$ is a (positive) Laplacian on~$\partial\Omega$ (for example, the Laplace--Beltrami operator). In the light of Lemma~\ref{CannyTrick}, the inequality~\eqref{DNBound} may be restated as
\eq{
\|\DNo f\|_{L_1(\partial\Omega)}\lesssim \|\DNi f\|_{L_1(\partial \Omega)}, \quad \text{where}\ f = u|_{\partial \Omega}.
}
By the information above,~$\DNi$ admits a left parametrix, which has the principal symbol~$(\sqrt{\Delta_{\partial\Omega}})^{-1}$. By and the standard calculus of pseudodifferential operators (Subsections~$7.3$ and~$7.4$ of~\cite{Taylor1996}), this implies
\eq{
\DNo f = \DNi f + \mathrm{Er}\DNi f, 
}
where~$\mathrm{Er}$ is a pseudodifferential operator of order~$-1$; since~$\partial\Omega$ is compact, it maps~$L_1$ to itself. The inequaltiy~\eqref{DNBound} is proved.
\end{proof}
\begin{proof}[Proof of Corollary~\ref{HarmonicCorollary}.]
We apply Proposition~\ref{ExtensionTheorem} and construct the function~$v$. Then, by~\eqref{FormulaForGradient},~$\nabla u(x) = K*\Delta v (x)$ for~$x\in \Omega$, for~$K(\zeta) = c_d \zeta$,~$\zeta \in S^{d-1}$. It remains to apply Theorem~\ref{BoundedDomainTheorem}.
\end{proof}


\bibliography{/Users/mac/Documents/Bib/Mybib}{}

\begin{thebibliography}{10}

\bibitem{BB2004}
Jean Bourgain and Ha\"{\i}m Brezis.
\newblock New estimates for the {L}aplacian, the div-curl, and related {H}odge
  systems.
\newblock {\em C. R. Math. Acad. Sci. Paris}, 338(7):539--543, 2004.

\bibitem{GR2019}
Franz Gmeineder and Bogdan Rai\c{t}\u{a}.
\newblock Embeddings for {$\Bbb{A}$}-weakly differentiable functions on
  domains.
\newblock {\em J. Funct. Anal.}, 277(12):108278, 33, 2019.

\bibitem{GRV2024}
Franz Gmeineder, Bogdan Rai\c{t}\u{a}, and Jean Van~Schaftingen.
\newblock Boundary ellipticity and limiting {$\rm L^1$}-estimates on
  halfspaces.
\newblock {\em Adv. Math.}, 439:Paper No. 109490, 25, 2024.

\bibitem{Maz'ya2010}
Vladimir Maz'ya.
\newblock Estimates for differential operators of vector analysis involving
  {$L^1$}-norm.
\newblock {\em J. Eur. Math. Soc. (JEMS)}, 12(1):221--240, 2010.

\bibitem{Maz'ya2018}
Vladimir Maz'ya.
\newblock Seventy five (thousand) unsolved problems in analysis and partial
  differential equations.
\newblock {\em Integral Equations Operator Theory}, 90(2):Paper No. 25, 44,
  2018.

\bibitem{MS2009}
Vladimir Maz'ya and Tatyana Shaposhnikova.
\newblock A collection of sharp dilation invariant integral inequalities for
  differentiable functions.
\newblock In {\em Sobolev spaces in mathematics. {I}}, volume~8 of {\em Int.
  Math. Ser. (N. Y.)}, pages 223--247. Springer, New York, 2009.

\bibitem{Sobolev1938}
S.~Soboleff.
\newblock Sur un th{\'e}or{\`e}me d'analyse fonctionnelle.
\newblock {\em Rec. Math. Moscou, n. Ser.}, 4:471--497, 1938.

\bibitem{Spector2020}
Daniel Spector.
\newblock New directions in harmonic analysis on {$L^1$}.
\newblock {\em Nonlinear Anal.}, 192:111685, 20, 2020.

\bibitem{Stein1993}
Elias~M. Stein.
\newblock {\em Harmonic analysis: real-variable methods, orthogonality, and
  oscillatory integrals}, volume~43 of {\em Princeton Mathematical Series}.
\newblock Princeton University Press, Princeton, NJ, 1993.
\newblock With the assistance of Timothy S. Murphy, Monographs in Harmonic
  Analysis, III.

\bibitem{Stolyarov2022}
D.~M. Stolyarov.
\newblock Hardy-{L}ittlewood-{S}obolev inequality for {$p=1$}.
\newblock {\em Mat. Sb.}, 213(6):125--174, 2022.

\bibitem{Stolyarov2021bis}
Dmitriy Stolyarov.
\newblock Fractional integration of measures: {M}az$'$\!ya$'$\!s
  ${\Phi}$-inequalities.
\newblock https://arxiv.org/abs/2109.08014, to appear in Annali della Scuola
  normale superiore di Pisa.

\bibitem{Stolyarov2021}
Dmitriy Stolyarov.
\newblock On {M}az$'$\!ya$'$\!s {$\Phi$}-inequalities for martingale fractional
  integration and their {B}ellman functions.
\newblock {\em Michigan Mathematical Journal}, 74(3):509 -- 526, 2024.

\bibitem{Taylor1996}
Michael~E. Taylor.
\newblock {\em Partial differential equations. {II}}, volume 116 of {\em
  Applied Mathematical Sciences}.
\newblock Springer-Verlag, New York, 1996.
\newblock Qualitative studies of linear equations.

\bibitem{VanSchaftingen2013}
Jean Van~Schaftingen.
\newblock Limiting {S}obolev inequalities for vector fields and canceling
  linear differential operators.
\newblock {\em J. Eur. Math. Soc. (JEMS)}, 15(3):877--921, 2013.

\bibitem{VanSchaftingen2014}
Jean Van~Schaftingen.
\newblock Limiting {B}ourgain-{B}rezis estimates for systems of linear
  differential equations: theme and variations.
\newblock {\em J. Fixed Point Theory Appl.}, 15(2):273--297, 2014.

\bibitem{Vladimirov1971}
V.~S. Vladimirov.
\newblock {\em Equations of mathematical physics}, volume~3 of {\em Pure and
  Applied Mathematics}.
\newblock Marcel Dekker, Inc., New York, 1971.
\newblock Translated from the Russian by Audrey Littlewood.

\end{thebibliography}
\bibliographystyle{plain}

St. Petersburg State University, Department of Mathematics and Computer Science;


d.m.stolyarov at spbu dot ru.
\end{document}